\newtheorem{thm}{Theorem}[section]
\newtheorem{cor}{Corollary}[section]
\newtheorem{prop}{Proposition}[section]
\newtheorem{remk}{Remark}[section]
\numberwithin{equation}{section}
\title{The Riemann problem for a generalised Burgers equation with spatially decaying sound speed. II General qualitative theory}
\author{J. C. Meyer and D. J. Needham}
\date{\today}
\begin{document}

\maketitle

\begin{abstract}
    \noindent We establish that the initial value problem for a generalised Burgers equation considered in part I of this paper, \cite{DNJBJMCD1}, is well-posed.
    We also establish several qualitative properties of solutions to the initial value problem utilised in \cite{DNJBJMCD1}.
\end{abstract}

$ $

\noindent \textbf{MSC2020:} 35K58, 35K15, 35A01, 35A02. 

\noindent \textbf{Keywords:} Burgers equation, Cauchy problem, well-posed, classical solution. 

\section{Introduction}
In this paper, we establish that the initial value problem for the generalised Burgers equation considered in \cite{DNJBJMCD1}, is well-posed (specifically, see Theorem \ref{GWP}).
To establish the existence result, we adopt the approach used in \cite{HZ1}, and note, that related standard existence results for classical solutions in \cite{AF1}, and similar sources, cannot be applied due to insufficient regularity of solutions to the initial value problem as $t\to 0^+$. 
The approach is centered on establishing sufficient regularity on solutions to an implicit integral equation, to establish that they are equivalent to classical solutions to the initial value problem.
We subsequently establish uniqueness and continuous dependence results for solutions to the initial value problem, via maximum principles in \cite{JMDN1}, and, the Gr\"onwall inequality in \cite{HYJGYD1}, respectively. 

\subsection{The Initial Value Problem}

Let $T>0$, $D_T= \{ (x,t) \in \mathbb{R}\times (0,T] \}$ and $\partial D = \bar{D}_T\setminus D_T$. 
We consider the Cauchy problem for $u: D_T\to\mathbb{R}$ given by:

\begin{align}
\label{IVP1} 
& u\in L^\infty (D_T) \cap C^{2,1} (D_T); \\
\label{IVP2}
& u_t - u_{xx} + h_\alpha(x) uu_x = 0 \quad \text{ on } D_T; \\
\label{IVP2'}
& h_\alpha (x) = \frac{1}{(1+x^2)^\alpha} \quad \forall\ x\in\mathbb{R}; \\
\label{IVP3''}
& u(x,t) = \int_{-\infty}^\infty \frac{u_0 (s)}{\sqrt{4\pi t}} e^{-\frac{(x-s)^2}{4t}} ds + O(t) \text{ uniformly for } x\in\mathbb{R} \text{ as } t\to 0^+.
\end{align}
Here $\alpha \in \mathbb{R}^+$ and $u_0:\mathbb{R}\to\mathbb{R}$ is the prescribed initial data, which is Lebesgue measurable, with $u_0\in L^\infty(\mathbb{R})$.
We denote the Cauchy problem given by \eqref{IVP1}-\eqref{IVP3''} as [IVP].
It should be noted that, via \eqref{IVP3''}, at each point $x\in\mathbb{R}$ at which $u_0$ is continuous, then $u(x,t)\to u_0(x)$ as $t\to 0^+$. 
Moreover, when $u_0$ is continuous for $x \in [x_1,x_2]$, then $u(x,t)\to u_0(x)$ uniformly for $x\in [x_1,x_2]$.
We later consider the specific case of [IVP] with $u_0:\mathbb{R}\to\mathbb{R}$ given by, 
\begin{equation}
\label{IVP3}
u_0(x) =  
\begin{cases} 
u^+, & x > 0, \\ 
u^-, & x \leq 0;
\end{cases}
\end{equation}
for $u^+,u^- \in \mathbb{R}$. 
For initial data given by \eqref{IVP3}, observe that we can replace \eqref{IVP3''} with $u:\bar{D}_T\to\mathbb{R}$, $u=u_0$ on $\partial D$, and $u \in C(\bar{D}_T\setminus \{ (0,0) \} )$.

\section{Qualitative Properties of [IVP]}

We introduce the fundamental solution to the heat equation on $D_T$, as $G: \mathcal{X}_T \to \mathbb{R}$, given by
\begin{equation}
\label{Gdef}
G(x,t;s,\tau ) = \frac{1}{\sqrt{4\pi (t-\tau )}}e^{-\frac{(x-s)^2}{4(t-\tau )}} \quad \forall\ (x,t;s,\tau ) \in \mathcal{X}_T
\end{equation} 
with $\mathcal{X}_T = \{ (x,t;s,\tau ) : (x,t) \in D_T,\ (s,\tau) \in \bar{D}_T,\ \tau < t \}$. 
Properties of $G$ which are used to establish the existence of solutions to [IVP] are given in Appendix \ref{Appendix-G}.

To establish global existence and uniqueness of solutions to [IVP], and local well-posedness in time, we consider an alternative to [IVP]. 
By applying a Duhamel principle, it follows that if $u:D_T\to\mathbb{R}$ is a solution to [IVP] then 
\begin{align}
\nonumber
& u(x,t) = \int_{-\infty}^\infty u_0(s) G(x,t;s,0) ds \\ 
\nonumber
& \quad \quad \quad \quad  +  \int_{0}^t \int_{-\infty}^\infty \frac{u^2(s,\tau )}{2} \left( G(x,t;s,\tau )h_\alpha '(s) \right. \\ 
\label{IE1}
& \left. \quad \quad \quad \quad \quad \quad \quad \quad \quad + G_s(x,t;s,\tau ) h_\alpha (s) \right) ds d\tau \quad \forall (x,t) \in D_T, \\
\label{IE2} 
& u\in C(D_T)\cap L^\infty (D_T).
\end{align} 
We will now demonstrate that there exists a local solution to \eqref{IE1} and \eqref{IE2}. 
The existence and regularity results for solutions to \eqref{IE1} and \eqref{IE2}, follow a similar approach to that developed in \cite{HZ1}. 
To begin, we have

\begin{prop} \label{locex}
The problem given by \eqref{IE1} and \eqref{IE2} has a solution $u:D_{T^*}\to\mathbb{R}$ with
\begin{align}
\nonumber
T^*= T(||u_0||_\infty , \alpha ) = 
& \min \left\{ 
1, \ 
\left( (||u_0||_\infty + 1)^2 \left( \frac{||h_\alpha'||_\infty}{2} + \frac{1}{\sqrt{\pi}} \right) \right)^{-2} , \ \right. \\
\label{locex0}
& \quad \quad \quad \quad \left. \left( 4 (||u_0||_\infty + 1) \left( \frac{||h_\alpha'||_\infty}{2} + \frac{1}{\sqrt{\pi}} \right) \right)^{-2}
\right\} .
\end{align}
In addition, $u$ satisfies $||u||_\infty \leq ||u_0||_\infty + 1$.
\end{prop}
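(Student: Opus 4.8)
The plan is to set up a contraction mapping (Picard iteration) on a suitable closed subset of $C(\bar D_{T^*}) \cap L^\infty$. Define the nonlinear operator $\mathcal{F}$ by
\[
(\mathcal{F}u)(x,t) = \int_{-\infty}^\infty u_0(s) G(x,t;s,0)\, ds + \int_0^t \int_{-\infty}^\infty \frac{u^2(s,\tau)}{2}\Big( G(x,t;s,\tau) h_\alpha'(s) + G_s(x,t;s,\tau) h_\alpha(s) \Big)\, ds\, d\tau,
\]
so that a solution to \eqref{IE1}--\eqref{IE2} is precisely a fixed point of $\mathcal{F}$. I would work on the ball
\[
B = \{ u \in C(D_{T^*}) \cap L^\infty(D_{T^*}) : \|u\|_\infty \le \|u_0\|_\infty + 1 \},
\]
equipped with the sup norm, which is a complete metric space. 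The first summand has sup norm at most $\|u_0\|_\infty$ (the heat semigroup is a contraction on $L^\infty$, using $\int_{-\infty}^\infty G(x,t;s,0)\,ds = 1$), so the whole burden is to bound the Duhamel integral.

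The key estimates are the two bounds for $G$ from Appendix~\ref{Appendix-G}: the $L^1$-in-$s$ bound $\int_{-\infty}^\infty G(x,t;s,\tau)\,ds = 1$, and the crucial singular bound $\int_{-\infty}^\infty |G_s(x,t;s,\tau)|\,ds \le C (t-\tau)^{-1/2}$ with $C = 1/\sqrt{\pi}$ (this comes from $G_s = -\frac{(x-s)}{2(t-\tau)} G$ and an explicit Gaussian integral). Using $\|h_\alpha\|_\infty = 1$ and writing $M = \|u_0\|_\infty + 1$, for $u \in B$ the Duhamel term is bounded in sup norm by
\[
\int_0^t \frac{M^2}{2}\left( \|h_\alpha'\|_\infty \cdot 1 + \frac{1}{\sqrt{\pi (t-\tau)}} \right) d\tau \le \frac{M^2}{2}\left( \|h_\alpha'\|_\infty\, t + \frac{2\sqrt t}{\sqrt\pi} \right) \le M^2 \sqrt t\left( \frac{\|h_\alpha'\|_\infty}{2} + \frac{1}{\sqrt\pi}\right),
\]
using $t \le 1$ so $t \le \sqrt t$. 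Demanding this be $\le 1$ gives the second entry in the minimum \eqref{locex0}; hence $\mathcal{F}$ maps $B$ into $B$. For the contraction estimate, $u^2 - v^2 = (u+v)(u-v)$ gives $\|u^2 - v^2\|_\infty \le 2M \|u-v\|_\infty$ on $B$, so the same computation yields $\|\mathcal{F}u - \mathcal{F}v\|_\infty \le 2M\sqrt{T^*}\big( \frac{\|h_\alpha'\|_\infty}{2} + \frac{1}{\sqrt\pi}\big) \|u - v\|_\infty$; demanding the factor be $\le \tfrac12$ gives the third entry in \eqref{locex0}. One also takes $T^* \le 1$, which is the first entry. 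Finally I would check that $\mathcal{F}u \in C(D_{T^*})$ whenever $u$ is bounded — continuity of the heat-semigroup term is standard, and continuity of the Duhamel term follows from dominated convergence together with the integrable-in-$\tau$ bounds just derived (the singularity $(t-\tau)^{-1/2}$ is integrable).

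The main obstacle is handling the $G_s$ term, which is genuinely singular: $G_s(x,t;s,\tau)$ is not integrable in $\tau$ up to $t$ if one naively pulls absolute values inside without first integrating in $s$. The resolution, as above, is to integrate in $s$ first, picking up the $(t-\tau)^{-1/2}$ factor which is then integrable in $\tau$; this order of operations is what makes all the estimates close and is the reason the time threshold scales like $\sqrt{T^*}$ rather than $T^*$. A secondary technical point is verifying continuity (not just boundedness) of $\mathcal{F}u$ so that the fixed point genuinely lies in $C(D_{T^*})$; this requires care near $t = 0$ but is controlled by the same dominated-convergence argument. Once $\mathcal{F}$ is shown to be a contraction of $B$ into itself, the Banach fixed point theorem yields the unique fixed point $u \in B$, which is the claimed solution, and the bound $\|u\|_\infty \le \|u_0\|_\infty + 1$ is immediate from membership in $B$.
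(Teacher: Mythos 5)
Your proposal is correct and follows essentially the same route as the paper: a contraction mapping argument on the ball $\{\|v\|_\infty \le \|u_0\|_\infty + 1\}$ in the sup norm, using $\int_{-\infty}^\infty G\,ds = 1$ and $\int_{-\infty}^\infty |G_s|\,ds = (\pi(t-\tau))^{-1/2}$ to obtain the self-mapping and contraction estimates that produce exactly the three terms in \eqref{locex0}. The only cosmetic difference is that the paper cites the H\"older-type estimates \eqref{Gprop1'}, \eqref{Gprop2'}, \eqref{Gprop1t}, \eqref{Gprop2t} to justify continuity of the Duhamel term, where you invoke dominated convergence.
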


\begin{proof}
Consider the set $\mathcal{S}$ of functions $v : D_{T^*}\to\mathbb{R}$ which satisfy \eqref{IE2} on $D_{T^*}$ and are such that
\begin{equation}
\label{locex1} || v ||_\infty \leq ||u_0||_\infty + 1.
\end{equation}
Next, consider the mapping $M: \mathcal{S} \to \mathbb{R}(D_{T^*})$ given by $M[v]$ for $v\in \mathcal{S}$ where
\begin{align}
\nonumber
M[v](x,t) & = \int_{-\infty}^\infty u_0(s) G(x,t;s,0) ds \\
\label{locex2}
& \quad +  \int_{0}^t \int_{-\infty}^\infty \frac{v^2(s,\tau )}{2} \left( G(x,t;s,\tau )h_\alpha '(s) + G_s(x,t;s,\tau ) h_\alpha (s) \right) ds d\tau
\end{align}
for all $(x,t)\in D_{T^*}$. 
Observe that the first term in the right hand side of \eqref{locex2} is the solution to the heat equation with measurable initial data $u_0 \in L^\infty (\mathbb{R})$, and in particular, is contained in $C^{2,1}(D_{T^*})\cap L^\infty (D_{T^*})$, with bound $||u_0||_\infty$ on $D_{T^*}$.
Also, using \eqref{Gprop1} and \eqref{Gprop2}, it follows that the integrand of the second term on the right hand side of \eqref{locex2} is absolutely integrable, and hence the integral is well-defined, and bounded on $D_{T^*}$, for each $v\in \mathcal{S}$.  
Moreover, via \eqref{Gprop1'}, \eqref{Gprop2'}, \eqref{Gprop1t} and \eqref{Gprop2t}, it follows that the second term in the right hand side of \eqref{locex2} is continuous on $D_{T^*}$ for each $v\in \mathcal{S}$. 
Furthermore, for each $v \in \mathcal{S}$, and all $(x,t) \in D_{T^*}$, observe, via \eqref{Gprop1} and \eqref{Gprop2}, that
\begin{align}
\nonumber 
& \left| \int_{0}^t \int_{-\infty}^\infty \frac{v^2 (s,\tau )}{2} \left( G(x,t;s,\tau )h_\alpha '(s) + G_s(x,t;s,\tau ) h_\alpha (s) \right) ds d\tau \right| \\
\nonumber 
& \quad \leq \frac{||v||_\infty^2}{2} \int_{0}^t \int_{-\infty}^\infty \left( ||h_\alpha'||_\infty G(x,t;s,\tau ) + || h_\alpha ||_\infty |G_s(x,t;s,\tau )|  \right) ds d\tau \\
\nonumber
& \quad \leq ||v||_\infty^2 \left( \frac{||h_\alpha'||_\infty}{2} \sqrt{t} + \frac{1}{\sqrt{\pi}} \right) \sqrt{t} \\ 
\label{locex3}
& \quad \leq 1.
\end{align}
Consequently it follows from \eqref{locex1}-\eqref{locex3} that $M:\mathcal{S}\to\mathcal{S}$.
Now for $v_1,v_2\in\mathcal{S}$ we have 
\begin{align}
\nonumber
& |M[v_1](x,t) - M[v_2](x,t)| \\
\nonumber
& \quad \leq \int_{0}^t \int_{-\infty}^\infty \frac{|v_1^2 (s,\tau ) - v_2^2(s,\tau )|}{2} \left| G(x,t;s,\tau )h_\alpha '(s) + G_s(x,t;s,\tau ) h_\alpha (s) \right| ds d\tau \\
\label{locex4}
& \quad \leq (||v_1||_\infty + ||v_2||_\infty ) \left( \frac{||h_\alpha'||_\infty}{2}\sqrt{t} + \frac{1}{\sqrt{\pi}} \right) \sqrt{t}||v_1 - v_2||_\infty 
\end{align}
for all $(x,t)\in D_{T^*}$, again using \eqref{Gprop1} and \eqref{Gprop2}. 
It follows from \eqref{locex4} and \eqref{locex0} that 
\begin{equation}
\label{locex5}
|| M[v_1] - M[v_2] ||_\infty \leq \frac{1}{2} ||v_1 - v_2||_\infty
\end{equation}
for all $v_1,v_2 \in \mathcal{S}$, and hence, $M$ is a contraction mapping. 
Since the metric space $(\mathcal{S},|| \cdot ||_\infty )$ is complete, it follows that there exists $u^*\in \mathcal{S}$ such that $u^* = M(u^*)$, i.e. $u^*:D_{T^*}\to\mathbb{R}$ is a solution to \eqref{IE1} and \eqref{IE2}, as required.
\end{proof}

\noindent We now establish that the solution $u:D_{T^*}\to\mathbb{R}$ to \eqref{IE1} and \eqref{IE2} given in Proposition \ref{locex} is twice (once) continuously differentiable with repsect to $x$ ($t$), and hence, is a local solution to [IVP]. 
To begin, for a solution $u:D_{T^*}\to\mathbb{R}$ to \eqref{IE1} and \eqref{IE2}, we define the sequence of functions $u_n:D_{T^*}\to\mathbb{R}$ to be

\begin{align}
\nonumber
u_n(x,t) & = \int_{-\infty}^\infty u_0(s) G(x,t;s,0) ds \\ 
\label{locex6}
& \quad  +  \int_{0}^{t_n} \int_{-\infty}^\infty \frac{u^2 (s,\tau )}{2} \left( G(x,t;s,\tau )h_\alpha '(s) + G_s(x,t;s,\tau ) h_\alpha (s) \right) ds d\tau ,
\end{align}
for all $(x,t)\in D_{T^*}$ and $n\in\mathbb{N}$ with $t_n=t-t/{2n}$. 
Observe that for each $n\in\mathbb{N}$, we have $u_n\in C(D_{T^*})\cap L^\infty (D_{T^*})$, via \eqref{Gprop1'}, \eqref{Gprop2'}, \eqref{Gprop1t} and \eqref{Gprop2t}. 
Moreover, as $n\to\infty$, $u_n$ converges to $u$ uniformly on compact subsets of $D_{T^*}$. 
Next we have
\begin{prop}\label{locexHu}
For each $\beta \in (0,1)$, there exists a constant\footnote{Throughout the paper, we denote constants by $c(\cdot , \ldots , \cdot)$, which can change line-by-line, but nonetheless depend only on the quantities listed in brackets.} $c$ such that the solution $u:D_{T^*}\to\mathbb{R}$ to \eqref{IE1} and \eqref{IE2} given in Proposition \eqref{locex} satisfies
\begin{equation} 
\label{locexHu1}
| u(x_1, t) - u(x_2,t) | \leq c(||u_0||_\infty ,\alpha, \beta ) \left( \frac{|x_1-x_2|}{\sqrt{t}} \right)^\beta
\end{equation}
for all $(x_1,t),(x_2,t) \in D_{T^*}$. 
\end{prop}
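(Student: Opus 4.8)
The plan is to estimate the spatial Hölder seminorm of $u$ directly from the integral equation \eqref{IE1}, using the bound $\|u\|_\infty \le \|u_0\|_\infty+1$ from Proposition \ref{locex}. Write $u(x_1,t)-u(x_2,t)$ as the difference of the two terms on the right-hand side of \eqref{IE1}. For the first term, $\int_{-\infty}^\infty u_0(s)\bigl(G(x_1,t;s,0)-G(x_2,t;s,0)\bigr)ds$, I would bound $|u_0(s)|\le\|u_0\|_\infty$ and use the standard Gaussian estimate: for the heat kernel one has $\int_{-\infty}^\infty|G(x_1,t;s,0)-G(x_2,t;s,0)|ds \le c\,(|x_1-x_2|/\sqrt t)^\beta$ for any $\beta\in(0,1)$ — in fact one can even take $\beta=1$ here with a factor $|x_1-x_2|/\sqrt t$, after interpolating the trivial bound $\le 2$ with the bound $\le c|x_1-x_2|/\sqrt t$ coming from $\int|G_x|\,ds = c/\sqrt t$ and the mean value theorem. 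The presence of $\beta\in(0,1)$ in the statement is there to accommodate the second, more singular term.

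For the second term I would split off $\tfrac12 u^2(s,\tau)$, bounded by $\tfrac12(\|u_0\|_\infty+1)^2$, and reduce to estimating
\begin{equation}
\nonumber
\int_0^t\int_{-\infty}^\infty \Bigl| \bigl(G(x_1,t;s,\tau)-G(x_2,t;s,\tau)\bigr)h_\alpha'(s) + \bigl(G_s(x_1,t;s,\tau)-G_s(x_2,t;s,\tau)\bigr)h_\alpha(s)\Bigr|\,ds\,d\tau,
\end{equation}
using $\|h_\alpha\|_\infty, \|h_\alpha'\|_\infty<\infty$. The term with $G$ (not differentiated in $s$) is handled as above and is harmless. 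The delicate piece is the one involving $G_s$: integrating in $s$ gives a factor $c/\sqrt{t-\tau}$ from $\int|G_s|\,ds$, which is only borderline integrable against $d\tau$ after one more power of $(t-\tau)^{-1/2}$ is extracted from the $x$-difference. Here I would interpolate: the crude bound $\int_{-\infty}^\infty|G_s(x_1,t;s,\tau)-G_s(x_2,t;s,\tau)|\,ds \le c/\sqrt{t-\tau}$, and the mean-value bound $\le c\,|x_1-x_2|/(t-\tau)$ obtained from $\int|G_{sx}|\,ds = c/(t-\tau)$. Interpolating these two with weight $\beta$ yields $\le c\,|x_1-x_2|^\beta (t-\tau)^{-(1+\beta)/2}$, and then
\begin{equation}
\nonumber
\int_0^t (t-\tau)^{-\frac{1+\beta}{2}}\,d\tau = \frac{2}{1-\beta}\, t^{\frac{1-\beta}{2}},
\end{equation}
which is finite precisely because $\beta<1$. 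Collecting the factors produces $c(\|u_0\|_\infty,\alpha,\beta)\,|x_1-x_2|^\beta t^{(1-\beta)/2} \le c(\|u_0\|_\infty,\alpha,\beta)\,(|x_1-x_2|/\sqrt t)^\beta$ after using $T^*\le 1$ (so $t\le 1$) to absorb the leftover power of $t$, with the constant depending on $\alpha$ through $\|h_\alpha\|_\infty,\|h_\alpha'\|_\infty$ and on $\|u_0\|_\infty$ through the bound on $\|u\|_\infty$. Strictly, to keep everything rigorous one would carry out the estimate for the approximating functions $u_n$ of \eqref{locex6} — whose integrals run over $[0,t_n]$ with $t_n<t$, so all kernels are smooth and the differentiations under the integral sign are justified — obtain the bound uniformly in $n$, and pass to the limit using the uniform-on-compacts convergence $u_n\to u$.

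The main obstacle is precisely the $G_s$ term: naively one loses a full power $(t-\tau)^{-1}$ which is not integrable, so the argument genuinely needs the interpolation between the $L^1_s$ bounds on $G_s$-differences and on $G_{sx}$-differences, together with $\beta<1$, to close. The required pointwise/integral estimates on $G$, $G_s$, $G_{sx}$ and their spatial increments are collected in Appendix \ref{Appendix-G} (the \eqref{Gprop1}–\eqref{Gprop2t} family), so the proof is a matter of assembling them with the right interpolation exponent; no new idea beyond that is needed.
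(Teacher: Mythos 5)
Your argument is correct and follows essentially the same route as the paper: decompose via \eqref{IE1}, bound the initial-data term with the H\"older estimate \eqref{Gprop1'}, bound the nonlinear term using $\|u\|_\infty\le\|u_0\|_\infty+1$ together with \eqref{Gprop1'} and \eqref{Gprop2'} (your interpolation between the $G_s$ and $G_{sx}$ bounds is precisely how \eqref{Gprop2'} is obtained), integrate the resulting $(t-\tau)^{-(1+\beta)/2}$ singularity, and absorb the leftover power of $t$ using $T^*\le 1$. The detour through the approximants $u_n$ is unnecessary here (no differentiation under the integral is involved), but harmless.
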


\begin{proof}
Let $u:D_{T^*}\to\mathbb{R}$ be the solution to \eqref{IE1} and \eqref{IE2} given by Proposition \ref{locex}. 
Then via \eqref{Gprop1'}, for each $\beta \in (0,1)$, it follows that
\begin{equation}
\label{locexHu2} 
\left| \int_{-\infty}^\infty u_0(s) (G(x_1,t;s,0) - G(x_2,t;s,0)) ds \right| \leq ||u_0||_\infty c(\beta) \left( \frac{|x_1-x_2|}{\sqrt{t}} \right)^\beta
\end{equation}
for all $(x_1,t),(x_2,t) \in D_{T^*}$. 
Moreover, it follows from \eqref{Gprop1'}, \eqref{Gprop2'} and \eqref{locex0} that
\begin{align}
\nonumber
& \bigg| \int_0^t \int_{-\infty}^\infty 
\frac{u^2(s,\tau)}{2}[ ( G(x_1,t;s,\tau ) - G(x_2,t;s,\tau))h_{\alpha}'(s) \\
\nonumber 
& \quad \quad \quad \quad  + ( G_s(x_1,t;s,\tau ) - G_s(x_2,t;s,\tau))h_{\alpha}(s)] ds d\tau \bigg| \\ 
\nonumber 
& \quad \leq \frac{(||u_0||_\infty + 1)^2}{2} \int_0^t \left( ||h_\alpha'||_\infty c(\beta)\left( \frac{|x_1-x_2|}{\sqrt{t-\tau}}\right)^\beta + c(\beta)\left( \frac{|x_1-x_2|}{\sqrt{t-\tau}}\right)^\beta \frac{1}{\sqrt{t-\tau}} \right) d\tau \\
\label{locexHu3'}
& \quad \leq  c(||u_0||_\infty , \alpha , \beta ) |x_1-x_2|^\beta {T^*}^{(1-\beta )/2} \\
\label{locexHu3}
& \quad \leq c(||u_0||_\infty, \alpha, \beta)\left(  \frac{|x_1-x_2|}{\sqrt{t}}\right)^\beta
\end{align}
for all $(x_1,t),(x_2,t) \in D_{T^*}$. 
Inequality \eqref{locexHu1} follows from \eqref{locexHu2} and \eqref{locexHu3}, 
as required.

\end{proof}
\noindent Consequently we have

\begin{prop} \label{locexdx}
For the solution $u:D_{T^*}\to\mathbb{R}$ of \eqref{IE1} and \eqref{IE2} given in Proposition \eqref{locex}, $u_x:D_{T^*}\to\mathbb{R}$ exists and $||u_x(\cdot , t)||_\infty$ exists, with $u_x\in C(D_{T^*})$. 
In addition, for each $\beta \in (0,1)$, 
\begin{align} 
\label{locexdx1}
& || u_x(\cdot, t) ||_\infty \leq \frac{c(||u_0||_\infty , \alpha , \beta )}{\sqrt{t}} \quad \forall\ t\in (0,T^*] ; \\
\label{locexdx2}
& | u_x(x_1, t) - u_x(x_2,t) | \leq \frac{c(||u_0||_\infty,\alpha,\beta)}{t^{(1+\beta)/2}} |x_1 - x_2 |^\beta \quad \forall\ t\in (0,T^*].
\end{align} 
\end{prop}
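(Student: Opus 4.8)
The plan is to differentiate the integral equation \eqref{IE1} formally with respect to $x$ and show that the resulting integral converges and represents $u_x$. Write $u = u_{\mathrm{lin}} + N[u]$, where $u_{\mathrm{lin}}(x,t) = \int_{-\infty}^\infty u_0(s) G(x,t;s,0)\, ds$ is the heat-semigroup term and $N[u]$ is the nonlinear double integral. For $u_{\mathrm{lin}}$, differentiation under the integral sign is standard (using \eqref{Gprop1'} type bounds on $G_x$), giving $\|(u_{\mathrm{lin}})_x(\cdot,t)\|_\infty \le c\|u_0\|_\infty / \sqrt{t}$ and the stated H\"older bound in $x$ with constant $c/t^{(1+\beta)/2}$. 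So the work is entirely in the nonlinear term.

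For $N[u]$, I would first integrate by parts in $s$ to move the $s$-derivative off $G$: using $\partial_s[ G(x,t;s,\tau) h_\alpha(s)] = G_s h_\alpha + G h_\alpha'$, one rewrites the integrand so that the candidate for $\partial_x N[u]$ is
\begin{equation}
\nonumber
\int_0^t \int_{-\infty}^\infty G_x(x,t;s,\tau)\, \partial_s\!\left( \tfrac{u^2(s,\tau)}{2} h_\alpha(s) \right) ds\, d\tau,
\end{equation}
which only requires $u_x$ to exist — but since we do not yet know that, I would instead keep the $G_s$ form and differentiate $G$ and $G_s$ directly in $x$. The key estimates are $|G_x| \le c(t-\tau)^{-1/2} G^{1/2}$-type pointwise bounds from Appendix \ref{Appendix-G} (\eqref{Gprop1'}), which give $\int_{-\infty}^\infty |G_x(x,t;s,\tau)|\, ds \le c (t-\tau)^{-1/2}$ and $\int_{-\infty}^\infty |G_{xs}(x,t;s,\tau)|\, ds \le c (t-\tau)^{-1}$. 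The first gives convergence of the $G_x h_\alpha'$ piece trivially; the second produces a non-integrable $(t-\tau)^{-1}$ singularity, and this is the main obstacle. The resolution — as in \cite{HZ1} — is to exploit the H\"older continuity of $u^2$ in $s$ established in Proposition \ref{locexHu}: subtract off the value of $\tfrac{u^2}{2} h_\alpha$ at $s = x$, using $\int_{-\infty}^\infty G_{xs}(x,t;s,\tau)\, ds = 0$, so the dangerous term becomes
\begin{equation}
\nonumber
\int_0^t \int_{-\infty}^\infty G_{xs}(x,t;s,\tau)\left( \tfrac{u^2(s,\tau)}{2} h_\alpha(s) - \tfrac{u^2(x,\tau)}{2} h_\alpha(x) \right) ds\, d\tau,
\end{equation}
and the H\"older factor $|s-x|^\beta$ upgrades the $(t-\tau)^{-1}$ bound to $(t-\tau)^{-1+\beta/2}$, which is integrable.

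Having shown the candidate integral converges absolutely and uniformly on compact subsets of $D_{T^*}$, I would justify that it actually equals $\partial_x N[u]$ by working with the truncated functions $u_n$ from \eqref{locex6}: each $u_n$ is manifestly $C^{2,1}$ (the $\tau$-integral stops at $t_n < t$, so $G$ is smooth), one differentiates $u_n$ in $x$ rigorously, and passes to the limit using uniform convergence of the differentiated integrals, giving $u_x \in C(D_{T^*})$. Continuity of $u_x$ follows from the same dominated-convergence / uniform-convergence argument together with the continuity properties \eqref{Gprop1t}, \eqref{Gprop2t} of $G$ and its derivatives. Finally, the two quantitative bounds \eqref{locexdx1} and \eqref{locexdx2} come from collecting the estimates: the $L^\infty$ bound follows by taking $\beta \in (0,1)$ in the convergent estimates and absorbing powers of $T^* \le 1$; the H\"older-in-$x$ bound follows by running the same subtraction argument but estimating the difference $G_x(x_1,\cdot) - G_x(x_2,\cdot)$ (resp. $G_{xs}$) via \eqref{Gprop1'}, \eqref{Gprop2'}, which yields the factor $|x_1-x_2|^\beta$ at the cost of an extra half-power of $(t-\tau)^{-1}$, still integrable after the H\"older gain, and produces the claimed $t^{-(1+\beta)/2}$ prefactor. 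I expect the bookkeeping of which $\beta$'s appear where (one may need to start from a slightly larger H\"older exponent in Proposition \ref{locexHu} to have room) to be the only delicate point beyond the singularity cancellation.
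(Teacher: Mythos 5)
Your treatment of existence, continuity and the bound \eqref{locexdx1} is essentially the paper's argument: differentiate the truncated approximants $u_n$ of \eqref{locex6}, handle the $G_{sx}$ (i.e. $G_{ss}$-type) kernel by subtracting the value of $\tfrac{u^2}{2}h_\alpha$ at $s=x$ (legitimate because the kernel has vanishing mean, cf. \eqref{Gprop4}), use the H\"older estimate of Proposition \ref{locexHu} together with the mean value theorem for $h_\alpha$ to upgrade the $(t-\tau)^{-1}$ singularity to an integrable one, and then pass to the limit by uniform convergence of $u_{nx}$ on compact subsets. That part is correct and matches the paper.

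For the H\"older estimate \eqref{locexdx2} you diverge from the paper, and your sketch has a gap as stated. You propose to re-run the subtraction argument on the difference in $x_1,x_2$, citing \eqref{Gprop1'}, \eqref{Gprop2'}; but the kernel in the dangerous term is $G_{xs}$, whose difference is controlled by \eqref{Gprop3'}, not \eqref{Gprop2'}, and more importantly the integrated bounds \eqref{Gprop2'}--\eqref{Gprop3'} cannot simply be multiplied by the $s$-dependent H\"older factor $|s-x|^\beta$: you would need a weighted kernel-difference estimate (or a splitting of the time integral at $t-\tau\sim|x_1-x_2|^2$) that is not among the appendix properties, plus the exponent bookkeeping you yourself flag. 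The paper avoids all of this by exploiting what it has just proved: once $u_x$ exists with $\|u_x(\cdot,\tau)\|_\infty\le c/\sqrt{\tau}$, it integrates by parts to obtain the representation \eqref{locexdx8}, $u_x=\int u_0G_x\,ds-\int_0^t\!\int (uu_s)G_xh_\alpha\,ds\,d\tau$ --- precisely the formula you wrote down at the start and discarded because $u_x$ was not yet known to exist. With $uu_s$ bounded by $c/\sqrt{\tau}$, the difference estimate \eqref{Gprop2'} applies directly to the $G_x$ kernel and yields $c\,t^{-(1+\beta)/2}|x_1-x_2|^\beta$ after a beta-function integral, with no subtraction, no weighted estimates and no loss of H\"older exponent. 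Your route can be made to work, but you should either prove the weighted estimate you need or, more simply, return to the integrated-by-parts representation once \eqref{locexdx1} is in hand.
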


\begin{proof}
For $u_n:D_{T^*}\to\mathbb{R}$ given by \eqref{locex6}, since $u_0$ is measurable and $u_0 \in L^\infty(\mathbb{R})$, $u_{nx}:D_{T^*}\to\mathbb{R}$ exists, is continuous, and is given by 
\begin{align}
\nonumber
u_{nx}(x,t) & = \int_{-\infty}^\infty u_0(s) G_x(x,t;s,0) ds \\ 
\label{locexdx3}
& \quad  +  \int_{0}^{t_n} \int_{-\infty}^\infty \frac{u^2 (s,\tau )}{2} \left( G_x(x,t;s,\tau )h_\alpha '(s) + G_{sx}(x,t;s,\tau ) h_\alpha (s) \right) ds d\tau ,
\end{align}
for all $(x,t)\in D_{T^*}$. 
After a change of variables $s=x+2\sqrt{t-\tau}\lambda$, the second integral in \eqref{locexdx3} can be expressed as
\begin{align}
\nonumber 
& \int_{0}^{t_n} \int_{-\infty}^\infty \frac{u^2 (s,\tau )}{2} G_x(x,t;s,\tau )h_\alpha '(s) ds d\tau \\ 
\label{locexdx4}
& \quad - \int_{0}^{t_n} \int_{-\infty}^\infty  u^2(x+ 2\sqrt{t-\tau}\lambda , \tau ) \frac{\left( \lambda^2 - 1/2\right)}{\sqrt{\pi}(t-\tau )} e^{-\lambda^2}h_\alpha (x+ 2\sqrt{t-\tau}\lambda ) d\lambda d\tau ,
\end{align}
for all $(x,t)\in D_{T^*}$. 
Now, the second integral in \eqref{locexdx4} can be expressed as
\begin{align}
\nonumber
& \int_{0}^{t_n} \int_{-\infty}^\infty  \left( u^2(x+ 2\sqrt{t-\tau}\lambda , \tau ) h_\alpha (x+ 2\sqrt{t-\tau}\lambda ) - u^2(x,\tau ) h_\alpha (x) \right) \\
\label{locexdx5}
& \quad \quad \quad \quad \quad
\times \frac{\left( \lambda^2 - 1/2 \right)}{\sqrt{\pi}(t-\tau )} e^{-\lambda^2} d\lambda d\tau
\end{align}
for all $(x,t)\in D_{T^*}$. 
Using Proposition \ref{locexHu}, $h_\alpha'\in C(\mathbb{R}) \cap L^\infty (\mathbb{R})$, and the mean value theorem, it follows that for each $\beta \in (0,1)$, there exists a constant $c$ such that
\begin{align}
\nonumber
& |u^2(x+ 2\sqrt{t-\tau}\lambda , \tau ) h_\alpha (x+ 2\sqrt{t-\tau}\lambda ) - u^2(x,\tau ) h_\alpha (x) | \\
\nonumber
& \quad \leq 2||u(\cdot , \tau )||_\infty || h_\alpha ||_\infty |u(x+2\sqrt{t-\tau}\lambda , \tau ) - u(x,\tau)| \\
\nonumber
& \quad \quad + ||u(\cdot , \tau )||_\infty^2 |h(x+2\sqrt{t-\tau}\lambda)-h(x)| \\
\label{locexdx4''}
& \quad \leq c(||u_0||_\infty , \alpha, \beta ) \left( \frac{\sqrt{t-\tau}|\lambda|}{\sqrt{\tau}} \right)^\beta 
\end{align}
for all $(x,t;\lambda , \tau) \in \mathcal{X}_{T^*}$.
Therefore, the absolute value of the integral in \eqref{locexdx5} is bounded above by
\begin{align}
\nonumber
& \int_{0}^{t_n} \int_{-\infty}^\infty 
c(||u_0||_\infty , \alpha , \beta) \left( \frac{\sqrt{t-\tau}|\lambda|}{\sqrt{\tau}}\right)^\beta \left| \lambda^2 + 1/2 \right| \frac{1}{(t-\tau )} e^{-\lambda^2} ds d\tau \\
\nonumber
& \quad \leq c(||u_0||_\infty , \alpha , \beta) \int_0^{t_n} \frac{1}{\tau^{\beta / 2} (t-\tau)^{1-\beta / 2}} d\tau \\
\label{locexdx6} 
& \quad \leq c(||u_0||_\infty , \alpha , \beta)
\end{align}
for all $(x,t)\in D_{T^*}$. 
Therefore, via \eqref{locexdx3}, \eqref{locexdx4}, \eqref{locexdx5} and \eqref{locexdx6}, it follows from \eqref{Gprop2} that
\begin{equation}
\label{locexdx7}
|| u_{nx} (\cdot , t) ||_\infty \leq \frac{c(||u_0||_\infty , \alpha , \beta )}{\sqrt{t}}
\end{equation}
for all $t\in (0,T^*]$.
We now demonstrate that $u_{nx}$ converges uniformly on compact subsets of $D_{T^*}$, to a the continuous limit $u_x$, as $n\to\infty$. 
It follows from \eqref{Gprop2} and \eqref{locexdx4''} that
\begin{align}
\nonumber
& \left| \int_{t_n}^{t} \int_{-\infty}^\infty \frac{u^2 (s,\tau )}{2} \left( G_x(x,t;s,\tau )h_\alpha '(s) + G_{sx}(x,t;s,\tau ) h_\alpha (s) \right) ds d\tau \right| \\
\nonumber
& \quad \leq \frac{(||u_0||_\infty + 1)^2}{2} ||h_\alpha'||_\infty \int_{t_n}^t \frac{1}{\sqrt{\pi (t-\tau)}} d\tau \\
\nonumber
& \quad \quad + \bigg| \int_{t_n}^{t} \int_{-\infty}^\infty  \left( u^2(x+ 2\sqrt{t-\tau}\lambda , \tau ) h_\alpha (x+ 2\sqrt{t-\tau}\lambda ) - u^2(x,\tau ) h_\alpha (x) \right) \\
\nonumber
& \quad \quad \quad \quad \quad \quad \quad \times 
 \frac{\left( \lambda^2 - 1/2 \right)}{\sqrt{\pi}(t-\tau )} e^{-\lambda^2} ds d\tau \bigg| \\
\nonumber 
& \quad \leq c(||u_0||_\infty , \alpha) (2n)^{-1/2} \\
\nonumber 
& \quad \quad + \int_{t_n}^t \int_{-\infty}^\infty c(||u_0||_\infty , \alpha , \beta ) \left( \frac{\sqrt{t-\tau}|\lambda|}{\sqrt{\tau}} \right)^\beta |\lambda^2 - 1/2| \frac{1}{(t-\tau )} e^{-\lambda^2} d\lambda d\tau \\
\nonumber
& \quad \leq c(||u_0||_\infty , \alpha , \beta) \left(  (2n)^{-1/2} + \int_{1-1/(2n)}^1 \frac{1}{q^{\beta /2} (1-q)^{1-\beta /2}} dq \right)
\end{align}
for all $(x,t)\in D_{T^*}$.
Therefore, via \eqref{locexdx3}, it follows that $u_{nx}$ is uniformly convergent on (compact subsets of) $D_{T^*}$. 
It thus follows that there exists a continuous limit of $u_{nx}$ on $D_{T^*}$, which coincides with the derivative $u_x$. 
The bound in \eqref{locexdx1} follows immediately from \eqref{locexdx7}.
As a consequence $u_x:D_{T^*}\to\mathbb{R}$ can be represented, alternatively, as 
\begin{align}
\nonumber
u_{x}(x,t) & = \int_{-\infty}^\infty u_0(s) G_x(x,t;s,0) ds \\ 
\nonumber
& \quad  +  \int_{0}^{t} \int_{-\infty}^\infty \frac{u^2 (s,\tau )}{2} \left( G_x(x,t;s,\tau )h_\alpha '(s) + G_{sx}(x,t;s,\tau ) h_\alpha (s) \right) ds d\tau \\ 
\label{locexdx8} 
& = \int_{-\infty}^\infty u_0(s) G_x(x,t;s,0) ds 
- \int_{0}^{t} \int_{-\infty}^\infty (uu_s) (s,\tau ) G_x(x,t;s,\tau )h_\alpha(s) ds d\tau  
\end{align}
for all $(x,t)\in D_{T^*}$.
Finally, from \eqref{locexdx8}, \eqref{locexdx1} and \eqref{Gprop2'} it follows that 
\begin{align}
\nonumber
& |u_x(x_1,t)-u_x(x_2,t)| \\
\nonumber 
& \quad \leq \frac{c(||u_0||_\infty,\beta)}{\sqrt{t}} \left( \frac{|x_1 - x_2 |}{\sqrt{t}}\right)^\beta 
+ \int_{0}^{t} \frac{c(||u_0||_\infty,\alpha , \beta)}{\sqrt{\tau(t-\tau )}} \left( \frac{|x_1-x_2|}{\sqrt{t-\tau}} \right)^\beta d\tau \\
\nonumber
& \quad \leq \frac{c(||u_0||_\infty,\beta)}{\sqrt{t}} \left( \frac{|x_1 - x_2 |}{\sqrt{t}}\right)^\beta 
+ \frac{c(||u_0||_\infty,\alpha , \beta)}{t^{\beta/ 2}} \int_{0}^{1} \frac{1}{\sqrt{q} (1-q )^{(1+\beta)/2}} dq |x_1-x_2|^\beta \\
\label{locexdx9}
& \quad \leq \frac{c(||u_0||_\infty,\alpha,\beta)}{t^{(1+\beta)/2}} |x_1 - x_2 |^\beta 
\end{align}
for all $(x,t)\in D_{T^*}$, from which we arrive at \eqref{locexdx2}, as required. 
\end{proof}

\noindent We can now further extend the regularity in the next result.

\begin{prop} \label{locexdxdxdt}
For the solution $u:D_{T^*}\to\mathbb{R}$ of \eqref{IE1} and \eqref{IE2} given in Proposition \ref{locex}, $u_{xx}:D_{T^*}\to\mathbb{R}$ and $u_t:D_{T^*}\to\mathbb{R}$ both exist and are continuous on $D_{T^*}$.
In addition, for each $\beta \in (0,1)$, 
\begin{equation} 
\label{locexdxdxdt0}
|| u_{xx}(\cdot, t) ||_\infty , || u_{t}(\cdot, t) ||_\infty \leq \frac{c(||u_0||_\infty , \alpha , \beta )}{t} \quad \forall\ t\in (0,T^*] .
\end{equation}
\end{prop}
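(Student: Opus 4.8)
The plan is to bootstrap one further derivative from Propositions \ref{locexHu} and \ref{locexdx}, following the same scheme: work with the truncated functions $u_n$ of \eqref{locex6} (and their $x$-derivatives \eqref{locexdx3}, rewritten as in \eqref{locexdx8}), for which differentiation under the integral sign is legitimate because $t-\tau\ge t/2n>0$ on the domain of integration, and then pass to the limit $n\to\infty$ via uniform convergence on compact subsets of $D_{T^*}$. Two kernel facts drive the argument: the second-derivative heat-kernel bound $\int_{\mathbb R}|G_{xx}(x,t;s,0)|\,ds\le c/t$ (a direct computation), and the vanishing-mean identity $\int_{\mathbb R}G_{xx}(x,t;s,\tau)\,ds=\partial_{xx}\!\int_{\mathbb R}G(x,t;s,\tau)\,ds=0$.

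\emph{The bound for $u_{xx}$.} Differentiating \eqref{locexdx8} in $x$, the first term becomes $\int_{\mathbb R}u_0(s)G_{xx}(x,t;s,0)\,ds$, bounded by $c(\|u_0\|_\infty)/t$. In the nonlocal term $-\int_0^{t}\int_{\mathbb R}(uu_s)(s,\tau)h_\alpha(s)G_x(x,t;s,\tau)\,ds\,d\tau$ — handled first over $[0,t_n]$ — the $x$-derivative falls only on $G_x$, since $(uu_s)(s,\tau)h_\alpha(s)$ is $x$-independent, giving $-\int_0^{t_n}\int_{\mathbb R}(uu_sh_\alpha)(s,\tau)G_{xx}(x,t;s,\tau)\,ds\,d\tau$; subtracting the diagonal value and using $\int G_{xx}\,ds=0$ turns this into $-\int_0^{t_n}\int_{\mathbb R}[(uu_sh_\alpha)(s,\tau)-(uu_sh_\alpha)(x,\tau)]\,G_{xx}(x,t;s,\tau)\,ds\,d\tau$. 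By Propositions \ref{locexHu} and \ref{locexdx} (boundedness and $\beta$-H\"older continuity of $u$ and of $u_x$, with the weighted seminorms \eqref{locexHu1}, \eqref{locexdx1}, \eqref{locexdx2}) and the boundedness and smoothness of $h_\alpha$, the function $uu_sh_\alpha(\cdot,\tau)$ is bounded by $c\tau^{-1/2}$ and $\beta$-H\"older in its spatial argument with seminorm at most $c(\|u_0\|_\infty,\alpha,\beta)\tau^{-(1+\beta)/2}$. After the change of variables $s=x+2\sqrt{t-\tau}\lambda$ (under which $|G_{xx}(x,t;s,\tau)|\,ds\le c(t-\tau)^{-1}|\lambda^2-\tfrac12|e^{-\lambda^2}\,d\lambda$) the inner integral is at most $c(\|u_0\|_\infty,\alpha,\beta)\tau^{-(1+\beta)/2}(t-\tau)^{\beta/2-1}$, and $\int_0^t\tau^{-(1+\beta)/2}(t-\tau)^{\beta/2-1}\,d\tau$ is a Beta-type integral equal to $c\,t^{-1/2}$. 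Hence the nonlocal part of $u_{xx}$ is bounded by $c(\|u_0\|_\infty,\alpha,\beta)t^{-1/2}\le c/t$ on $(0,T^*]$ (recall $T^*\le1$), which gives \eqref{locexdxdxdt0} for $u_{xx}$; the tail $\int_{t_n}^t(\cdots)$ is, by the same power count, $O\!\big(t^{-1/2}\!\int_{1-1/2n}^1 q^{-(1+\beta)/2}(1-q)^{\beta/2-1}dq\big)\to0$ uniformly in $x$ and locally uniformly in $t\in(0,T^*]$, so, since $u_{nx}\to u_x$ already holds (Proposition \ref{locexdx}), the uniform limit of $u_{nxx}$ is the continuous function $u_{xx}$.

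\emph{The bound for $u_t$.} Rewrite the nonlocal term of \eqref{IE1} by integration by parts in $s$ (the manipulation that produced \eqref{locexdx8}) as $N(x,t):=-\int_0^t\int_{\mathbb R}(uu_s)(s,\tau)h_\alpha(s)G(x,t;s,\tau)\,ds\,d\tau$, so $u=H+N$ with $H(x,t)=\int_{\mathbb R}u_0(s)G(x,t;s,0)\,ds$; then $H_t=H_{xx}=\int_{\mathbb R}u_0(s)G_{xx}(x,t;s,0)\,ds$ is bounded by $c(\|u_0\|_\infty)/t$. Put $f:=uu_sh_\alpha$, which is continuous on $D_{T^*}$ with $|f(\cdot,\tau)|\le c\tau^{-1/2}$ and spatial $\beta$-H\"older seminorm $\le c\tau^{-(1+\beta)/2}$, as above. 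Differentiating the truncated potential $N_n(x,t)=-\int_0^{t_n}\int_{\mathbb R}f(s,\tau)G(x,t;s,\tau)\,ds\,d\tau$ in $t$ produces the boundary term $-(1-\tfrac1{2n})\int_{\mathbb R}f(s,t_n)G(x,t;s,t_n)\,ds$, which tends to $-f(x,t)$ as $n\to\infty$ because $G(x,t;\cdot,t_n)$ is an approximate identity and $f$ is continuous, together with $-\int_0^{t_n}\int_{\mathbb R}f(s,\tau)G_{xx}(x,t;s,\tau)\,ds\,d\tau$, which — after the same diagonal subtraction and Beta-type estimate used for $u_{xx}$ — converges to the nonlocal part $N_{xx}(x,t)$ of $u_{xx}$. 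Thus $N_t$ exists, is continuous, and $N_t=N_{xx}-f$; since $|N_{xx}(\cdot,t)|\le ct^{-1/2}$ and $|f(\cdot,t)|\le ct^{-1/2}$, $\|N_t(\cdot,t)\|_\infty\le c(\|u_0\|_\infty,\alpha,\beta)t^{-1/2}$. Consequently $u_t=H_t+N_t$ exists, is continuous on $D_{T^*}$, and $\|u_t(\cdot,t)\|_\infty\le c(\|u_0\|_\infty,\alpha,\beta)/t$ on $(0,T^*]$, completing \eqref{locexdxdxdt0}. (As a by-product one reads off $u_t-u_{xx}+h_\alpha uu_x=0$, but this is not needed here.)

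\emph{Main obstacle.} The only genuinely delicate point is the passage from the truncated integrals over $[0,t_n]$, where everything is smooth and differentiation under the integral is immediate, to the full integrals over $[0,t]$: the kernel $G_{xx}$ is integrable in $s$ only after one exploits its vanishing mean, and the $t$-derivative of the Duhamel term $N$ throws off the boundary contribution at $\tau=t$ that must be identified with $-f(x,t)$. The subtract-off-the-diagonal-value device, powered by the weighted $\beta$-H\"older bounds of Propositions \ref{locexHu} and \ref{locexdx} and the resulting convergent Beta integrals $\int_0^t\tau^{-(1+\beta)/2}(t-\tau)^{\beta/2-1}\,d\tau=c\,t^{-1/2}$, is precisely what makes both limits legitimate and yields the uniform convergence on compact subsets needed for continuity of $u_{xx}$ and $u_t$; the rest is bookkeeping of powers of $\tau$ and $t-\tau$.
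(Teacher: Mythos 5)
Your proposal is correct and follows essentially the same route as the paper: truncate at $t_n$, differentiate under the integral, exploit $\int_{\mathbb{R}}G_{ss}\,ds=0$ to subtract the diagonal value of $uu_sh_\alpha$, control it with the weighted H\"older bounds of Propositions \ref{locexHu} and \ref{locexdx} and the Beta-type integral $\int_0^t\tau^{-(1+\beta)/2}(t-\tau)^{\beta/2-1}\,d\tau\le c\,t^{-1/2}$, and obtain $u_t$ from the moving upper limit, whose boundary term is a $\delta$-sequence converging to $uu_xh_\alpha$, yielding $u_t=u_{xx}-uu_xh_\alpha$ and the stated $c/t$ bounds. Your splitting $u=H+N$ with $N_t=N_{xx}-f$ is only a cosmetic regrouping of the paper's identity \eqref{locexdt1}--\eqref{locexdt3} (where you even correctly retain the factor $1-\tfrac{1}{2n}$ that the paper suppresses), so no substantive difference or gap.
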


\begin{proof}
For $u_n:D_{T^*}\to\mathbb{R}$ given by \eqref{locex6}, since $u_0$ is measurable and $u_0 \in L^\infty(\mathbb{R})$, $u_{nxx}:D_{T^*}\to\mathbb{R}$ exists, is continuous, and is given by 
\begin{equation} 
\label{locexdxdx1}
u_{nxx}(x,t) = \int_{-\infty}^\infty u_0(s) G_{xx}(x,t;s,0) ds -  \int_{0}^{t_n} \int_{-\infty}^\infty  (uu_s)(s,\tau) G_{xx}(x,t;s,\tau )h_\alpha (s) ds d\tau ,
\end{equation}
for all $(x,t)\in D_{T^*}$. 
After a change of variables $s=x+2\sqrt{t-\tau}\lambda$, the second integral in \eqref{locexdxdx1} can be expressed as
\begin{equation}
\label{locexdxdx2}
\int_{0}^{t_n} \int_{-\infty}^\infty  (uu_s)(x+ 2\sqrt{t-\tau}\lambda , \tau ) \frac{\left( \lambda^2 - 1/2\right)}{\sqrt{\pi}(t-\tau )} e^{-\lambda^2}h_\alpha (x+ 2\sqrt{t-\tau}\lambda ) d\lambda d\tau ,
\end{equation}
for all $(x,t)\in D_{T^*}$. 
From \eqref{Gprop4} it follows that the second integral in \eqref{locexdx4} can be expressed as
\begin{align}
\nonumber
& \int_{0}^{t_n} \int_{-\infty}^\infty  \left( (uu_s)(x+ 2\sqrt{t-\tau}\lambda , \tau ) h_\alpha (x+ 2\sqrt{t-\tau}\lambda ) - (uu_s)(x,\tau ) h_\alpha (x) \right) \\
\label{locexdxdx3}
& \quad \quad \quad \quad \quad
\times \frac{\left( \lambda^2 - 1/2 \right)}{\sqrt{\pi}(t-\tau )} e^{-\lambda^2} d\lambda d\tau
\end{align}
for all $(x,t)\in D_{T^*}$. 
Using Propositions \ref{locexHu} and \ref{locexdx}, $h_\alpha'\in C(\mathbb{R}) \cap L^\infty (\mathbb{R})$, and the mean value theorem, it follows that for each $\beta \in (0,1)$, there exists a constant $c$ such that
\begin{align}
\nonumber
& |(uu_s)(x+ 2\sqrt{t-\tau}\lambda , \tau ) h_\alpha (x+ 2\sqrt{t-\tau}\lambda ) - (uu_s)(x,\tau ) h_\alpha (x) | \\
\nonumber
& \quad \leq ||u_s(\cdot , \tau )||_\infty || h_\alpha ||_\infty |u(x+2\sqrt{t-\tau}\lambda , \tau , \tau) - u(x,\tau)| \\
\nonumber
& \quad \quad + ||u(\cdot , \tau )||_\infty ||h_\alpha ||_\infty |u_s(x+2\sqrt{t-\tau}\lambda)-u_s(x)| \\
\nonumber
& \quad \quad + ||u(\cdot , \tau )||_\infty ||u_s(\cdot , \tau )||_\infty |h(x+2\sqrt{t-\tau}\lambda)-h(x)| \\
\label{locexdxdx4}
& \quad \leq c(||u_0||_\infty , \alpha, \beta ) \left ( \frac{1}{\sqrt{\tau}}  \left( \frac{\sqrt{t-\tau}|\lambda |}{\sqrt{\tau}} \right)^\beta \right)
\end{align}
for all $(x,t;\lambda , \tau) \in \mathcal{X}_{T^*}$.
Therefore, the absolute value of the integral in \eqref{locexdxdx3} is bounded above by
\begin{align}
\nonumber
& \int_{0}^{t_n} \int_{-\infty}^\infty 
c(||u_0||_\infty , \alpha , \beta) \frac{1}{\sqrt{\tau}} \left( \frac{\sqrt{t-\tau}|\lambda|}{\sqrt{\tau}}\right)^\beta \left| \lambda^2 + 1/2 \right| \frac{1}{(t-\tau )} e^{-\lambda^2} ds d\tau \\
\nonumber
& \quad \leq c(||u_0||_\infty , \alpha , \beta) \int_0^{t_n} \frac{1}{\tau^{(\beta + 1) / 2} (t-\tau)^{1-\beta / 2}} d\tau \\
\label{locexdxdx5} 
& \quad \leq \frac{c(||u_0||_\infty , \alpha , \beta)}{\sqrt{t}}
\end{align}
for all $(x,t)\in D_{T^*}$. 
Therefore, via \eqref{locexdxdx1}, \eqref{locexdxdx2}, \eqref{locexdxdx3} and \eqref{locexdxdx5}, it follows from \eqref{Gprop3} that
\begin{equation}
\label{locexdxdx6}
|| u_{nxx} (\cdot , t) ||_\infty \leq \frac{c(||u_0||_\infty , \alpha , \beta )}{t}
\end{equation}
for all $t\in (0,T^*]$.
It follows, as in the proof of Proposition \ref{locexdx} that $u_{nxx}$ converges uniformly on compact subsets of $D_{T^*}$, to the continuous limit $u_{xx}$, as $n\to\infty$ with the bound on $u_{xx}$ in \eqref{locexdxdxdt0} following immediately from \eqref{locexdxdx6}.
Consequently $u_{xx}:D_{T^*}\to\mathbb{R}$ can be represented, as
\begin{equation}
\label{locexdxdx7} 
u_{xx}(x,t) = \int_{-\infty}^\infty u_0(s) G_{xx}(x,t;s,0) ds -  \int_{0}^{t} \int_{-\infty}^\infty (uu_s)(s,\tau ) G_{xx}(x,t;s,\tau ) h_\alpha (s) ds d\tau  
\end{equation}
for all $(x,t)\in D_{T^*}$. 
Furthermore, $u_n:D_{T^*}\to\mathbb{R}$ given by \eqref{locex6}, since $u_0$ is measurable and $u_0 \in L^\infty(\mathbb{R})$, $u_{nt}:D_{T^*}\to\mathbb{R}$ exists, is continuous, and is given by
\begin{equation} 
\label{locexdt1}
u_{nt}(x,t) = u_{nxx}(x,t) - \int_{-\infty}^\infty  (uu_s)(s,t_n) G(x,t;s,t_n)h_\alpha (s) ds ,
\end{equation}
for all $(x,t)\in D_{T^*}$. 
From \eqref{Gprop1}, it follows that $G(x,t;s,t_n)$ forms a $\delta$-sequence as $n\to\infty$, and since $u$ and $u_x$ are continuous on $D_{T^*}$ it follows that 
\begin{equation} 
\label{locexdt2}
\int_{-\infty}^\infty  (uu_s)(s,t_n) G(x,t;s,t_n)h_\alpha (s) ds \to u(x,t)u_x(x,t)h_\alpha (x)
\end{equation}
for all $(x,t)\in D_{T^*}$. 
Moreover, on any compact subset of $D_{T^*}$ the convergence in \eqref{locexdt2} is uniform. 
Finally, it follows, as in the proof of Proposition \ref{locexdx} that $u_{nt}$ converges uniformly on compact subsets of $D_{T^*}$, to the continuous limit $u_{t}$, as $n\to\infty$.
As a consequence $u_{t}:D_{T^*}\to\mathbb{R}$ is continuous and can be represented, as
\begin{equation}
\label{locexdt3} 
u_{t}(x,t) = u_{xx}(x,t) - u(x,t)u_x(x,t)h_\alpha (x)
\end{equation}
for all $(x,t)\in D_{T^*}$. 
The bound on $u_t$ in \eqref{locexdxdxdt0} now follows from Propositions \ref{locex} and \ref{locexdx}, and \eqref{locexdt3}, as required. 
\end{proof}

\begin{cor} \label{locexIVP}
Let $u:D_{T^*}\to\mathbb{R}$ be the solution to \eqref{IE1} and \eqref{IE2} given in Proposition \eqref{locex}. 
Then $u:D_{T^*}\to\mathbb{R}$ is a solution to [IVP] with $T=T^*$.
\end{cor}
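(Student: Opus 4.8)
The plan is to verify, one at a time, that the function $u:D_{T^*}\to\mathbb{R}$ furnished by Proposition \ref{locex} satisfies each of \eqref{IVP1}, \eqref{IVP2}, \eqref{IVP2'} and \eqref{IVP3''} with $T=T^*$, and to notice that three of the four are already available. The membership $u\in L^\infty(D_{T^*})$ with $\|u\|_\infty\le\|u_0\|_\infty+1$, and the continuity $u\in C(D_{T^*})$, are part of the conclusion of Proposition \ref{locex} (indeed the solution is an element of $\mathcal{S}$, which by definition comprises functions obeying \eqref{IE2} and \eqref{locex1}). Propositions \ref{locexdx} and \ref{locexdxdxdt} state exactly that $u_x$, $u_{xx}$ and $u_t$ exist and are continuous on $D_{T^*}$, so $u\in C^{2,1}(D_{T^*})$; together with the $L^\infty$ bound this is \eqref{IVP1}. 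And the pointwise identity \eqref{locexdt3}, obtained in the course of proving Proposition \ref{locexdxdxdt}, reads $u_t-u_{xx}+h_\alpha u u_x=0$ on $D_{T^*}$, that is, \eqref{IVP2} (with $h_\alpha$ given by \eqref{IVP2'}). Hence the real content of the corollary is the prescribed behaviour as $t\to 0^+$ in \eqref{IVP3''}.

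For \eqref{IVP3''} I would return to the integral identity \eqref{IE1}, which $u$ satisfies by construction. Its first term is exactly the heat evolution $\int_{-\infty}^\infty\frac{u_0(s)}{\sqrt{4\pi t}}e^{-(x-s)^2/4t}\,ds$ appearing in \eqref{IVP3''}, so it remains only to bound the double-integral (Duhamel) term on the right of \eqref{IE1}, uniformly in $x$, as $t\to 0^+$. Applying \eqref{locex3} with $v=u$ (legitimate since $\|u\|_\infty\le\|u_0\|_\infty+1$) bounds this term by $(\|u_0\|_\infty+1)^2\big(\frac{\|h_\alpha'\|_\infty}{2}\sqrt{t}+\frac{1}{\sqrt\pi}\big)\sqrt{t}$ uniformly for $x\in\mathbb{R}$, hence it is $O(\sqrt t)$ and vanishes uniformly as $t\to 0^+$; this is the behaviour asserted in \eqref{IVP3''}, and it is exactly this uniform smallness that gives the convergence $u(x,t)\to u_0(x)$ at continuity points of $u_0$ (uniformly on intervals of continuity) noted after \eqref{IVP3}. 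An equivalent, slightly sharper rendering of the remainder comes from integrating by parts in $s$: since $h_\alpha'(s)G+h_\alpha(s)G_s=\partial_s\big(h_\alpha(s)G\big)$, the Duhamel term equals $-\int_0^t\!\int_{-\infty}^\infty(uu_s)(s,\tau)\,h_\alpha(s)\,G(x,t;s,\tau)\,ds\,d\tau$, and then the gradient bound \eqref{locexdx1}, together with $\int_{-\infty}^\infty G(x,t;s,\tau)\,ds=1$, controls it by $c(\|u_0\|_\infty,\alpha)\sqrt{t}$ uniformly in $x$. Assembling the four verifications shows $u$ solves [IVP] with $T=T^*$.

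I expect the first three items to be routine bookkeeping on top of Propositions \ref{locex}, \ref{locexdx} and \ref{locexdxdxdt}; the only step that needs thought is \eqref{IVP3''}, and even there the decisive point is the structural one that the discrepancy between $u$ and the heat evolution of $u_0$ is precisely the nonlinear term in \eqref{IE1}, so no new estimate is required. The one technicality I would be careful about is that all the integrals in \eqref{IE1}, and the boundary terms produced by the integration by parts in $s$, are absolutely convergent and vanish respectively; this is guaranteed by the decay and smoothing properties of $G$ collected in Appendix \ref{Appendix-G} and already exploited throughout Propositions \ref{locex}--\ref{locexdxdxdt}.
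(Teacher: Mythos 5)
Your verification follows the paper's proof step for step: \eqref{IVP1} is read off from Propositions \ref{locex}, \ref{locexdx} and \ref{locexdxdxdt}, \eqref{IVP2} from the identity \eqref{locexdt3}, and \eqref{IVP3''} from the integral identity \eqref{IE1}; in fact you give more detail than the paper, whose proof of the last step is simply ``via \eqref{IE1}''. The one point to flag is your assertion that the uniform $O(\sqrt{t})$ bound on the Duhamel term (obtained from \eqref{locex3}, or equivalently from the integration by parts together with \eqref{locexdx1}) is the behaviour asserted in \eqref{IVP3''}: as written, \eqref{IVP3''} requires a remainder that is $O(t)$ uniformly in $x$, which is strictly stronger than $O(\sqrt{t})$, and for discontinuous data such as \eqref{IVP3} the $G_s h_\alpha$ contribution near the discontinuity is genuinely of order $\sqrt{t}$, so no refinement of your estimate will produce $O(t)$ uniformly in $x$. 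Thus your estimate is correct and essentially optimal, but it verifies \eqref{IVP3''} only with $O(t)$ weakened to $O(\sqrt{t})$ (which still yields the convergence to $u_0$ at continuity points noted after \eqref{IVP3}); the paper's own one-line justification passes over exactly the same issue, so this is a wrinkle in the statement of \eqref{IVP3''} rather than a flaw peculiar to your argument, but you should not claim the $O(t)$ rate from an $O(\sqrt{t})$ bound.
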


\begin{proof}
From Propositions \ref{locex}, \ref{locexdx} and \ref{locexdxdxdt}, it follows that $u: D_{T^*}\to\mathbb{R}$ satisfies \eqref{IVP1}. 
Moreover, via \eqref{locexdt3} $u:D_{T^*}\to\mathbb{R}$ satisfies \eqref{IVP2}.
Finally, via \eqref{IE1}, $u: D_{T^*}\to\mathbb{R}$ satisfies \eqref{IVP3''}, as required.
\end{proof}

\begin{remk}
\label{remlocex}
Suppose for the solution $u:D_{T^*} \to \mathbb{R}$ to [IVP] constructed in Proposition \ref{locex}, that $u_0\in C^2(\mathbb{R})\cap W^{2,\infty}(\mathbb{R})$. 
Then following the arguments in Propositions \ref{locexHu}, \ref{locexdx} and \ref{locexdxdxdt}, it follows that $u$ can be naturally extended onto $\bar{D}_{T^*}$, with $u(x,0) = u_0(x)$ for all $x \in\mathbb{R}$, and we conclude that $u \in C^{2,1}(\bar{D}_{T^*})$. 
Moreover, $u_x$, $u_{xx}$ and $u_t$ are bounded on $\bar{D}_{T^*}$ by a constant $c(||u_0||_{W^{2,\infty}},\ \alpha )$, which is independent of $t$, recalling \eqref{locex0}. 
\end{remk}

\noindent Before we can establish the existence of global solutions to [IVP] we require \emph{a priori} bounds on solutions to [IVP].

\begin{prop} \label{prop1}
When $u: D_T\to\mathbb{R}$ is a solution to [IVP] then 
\begin{equation*} 
\inf_{x\in\mathbb{R}} u_0 \leq u \leq \sup_{ x\in\mathbb{R}} u_0 \quad \text{ on } D_T. 
\end{equation*}
\end{prop}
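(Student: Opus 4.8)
The plan is to establish the two-sided bound by a maximum-principle argument applied to the parabolic operator $L[u] = u_t - u_{xx} + h_\alpha(x) u u_x$, treating the first-order term as a lower-order perturbation with a locally bounded coefficient. Fix a solution $u$ to [IVP] on $D_T$, write $M = \sup_{x\in\mathbb{R}} u_0$ and $m = \inf_{x\in\mathbb{R}} u_0$, both finite since $u_0 \in L^\infty(\mathbb{R})$, and recall from \eqref{IVP1} that $u \in L^\infty(D_T) \cap C^{2,1}(D_T)$. I would only prove $u \leq M$ on $D_T$; the bound $u \geq m$ follows by applying the same reasoning to $-u$, which solves the same type of equation with initial data $-u_0$ (the nonlinearity $h_\alpha uu_x$ is odd under $u \mapsto -u$).

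First I would set up a comparison function on a large bounded rectangle. For $R > 0$, $\varepsilon > 0$ and a parameter $\mu \geq 0$ to be chosen, put $w(x,t) = M + \varepsilon + \varepsilon(x^2 + \mu t)$ on the closure of $Q_R = (-R,R) \times (0,T]$, and consider $v = u - w$. The aim is to show $v \leq 0$ on $\overline{Q_R}$ and then let $R \to \infty$, $\varepsilon \to 0$. On the parabolic boundary: at $t \to 0^+$, \eqref{IVP3''} shows $u(x,t)$ converges to the heat-flow of $u_0$ uniformly, which is bounded above by $M$, so $\limsup_{t\to 0^+} v \leq M - (M+\varepsilon) < 0$; on the lateral sides $x = \pm R$, $w \geq M + \varepsilon R^2$, which exceeds $\|u\|_{L^\infty(D_T)}$ once $R$ is large, so again $v < 0$ there. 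Suppose for contradiction that $\sup_{\overline{Q_R}} v > 0$; then the supremum is attained at an interior point or a top-boundary point $(x_0,t_0)$ with $t_0 > 0$, where $v_t \geq 0$, $v_x = 0$, $v_{xx} \leq 0$. At that point $u_x = w_x = 2\varepsilon x_0$, and one computes $L[w] = \mu\varepsilon - 2\varepsilon + h_\alpha(x_0)\, w(x_0,t_0)\, 2\varepsilon x_0$. The key estimate: on $\overline{Q_R}$ we have $|w| \leq M + \varepsilon + \varepsilon(R^2 + \mu T) =: C_R$ and $|h_\alpha(x)\, 2x| \leq 2\|xh_\alpha\|_\infty =: K$ (finite for every $\alpha > 0$ since $xh_\alpha(x) \to 0$), so $L[w] \geq \mu\varepsilon - 2\varepsilon - \varepsilon K C_R$; choosing $\mu = 2 + KC_R + 1$ makes $L[w] \geq \varepsilon > 0$. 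On the other hand, evaluating $L[u] - L[w]$ at $(x_0,t_0)$: since $u_x = w_x$ there, the quadratic terms cancel and $L[u] - L[w] = (u_t - w_t) - (u_{xx} - w_{xx}) = v_t - v_{xx} \geq 0$, so $L[u] \geq L[w] \geq \varepsilon > 0$, contradicting $L[u] = 0$ from \eqref{IVP2}. Hence $v \leq 0$ on $\overline{Q_R}$, i.e. $u(x,t) \leq M + \varepsilon + \varepsilon(x^2 + \mu t)$ for all $(x,t) \in \overline{Q_R}$.

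Then I would conclude by fixing $(x,t) \in D_T$, sending $R \to \infty$ (the bound $u(x,t) \leq M + \varepsilon(1 + x^2 + \mu_R t)$ holds for all large $R$, but $\mu_R$ depends on $R$, so this needs care — see below), and then $\varepsilon \to 0$. The main obstacle is precisely that $\mu_R = 2 + K C_R$ grows like $\varepsilon R^2$, so $\varepsilon \mu_R t$ does not vanish as $R \to \infty$ with $\varepsilon$ fixed. The fix is to choose $\varepsilon$ and $R$ together: given $\delta > 0$ and the target point $(x,t)$, first pick $\varepsilon$ small, then $R$ large enough that $\varepsilon R^2 > \|u\|_{L^\infty(D_T)}$ (for the lateral boundary) while keeping $\varepsilon(1 + x^2 + (2 + 2\|sh_\alpha\|_\infty(M+\varepsilon+\varepsilon R^2 + \varepsilon\mu T))\,t) < \delta$; since $t$ is fixed and we may also shrink $\varepsilon$ after choosing $R$ (re-examining: $R$ depends only on a lower bound for $\varepsilon$, so take $\varepsilon_0$, fix $R$ from $\varepsilon_0 R^2 > \|u\|_\infty$, then let $\varepsilon \to 0$ through $(0,\varepsilon_0]$ with $R$ frozen, and $\varepsilon C_R \to \varepsilon(M) $, $\varepsilon\mu_R \to 0$) the quantity tends to $0$. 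Cleaner still: replace $\varepsilon x^2$ by $\varepsilon e^{x}$-free approach and instead use the barrier $w = M + \varepsilon(1 + e^{\kappa(|x| - \gamma t)})$ for suitable $\kappa, \gamma$ depending only on $\|h_\alpha\|_\infty$, $\|h_\alpha'\|_\infty$ and $\|u\|_{L^\infty(D_T)}$ but not on $R$, which grows fast enough to dominate on the lateral boundary for all large $R$ while $w(x,t) \to M + \varepsilon(1 + e^{\kappa(|x|-\gamma t)})$ is finite and independent of $R$; then $R \to \infty$ first and $\varepsilon \to 0$ second is legitimate. Either way the argument is routine once the barrier is correctly chosen; I expect the bookkeeping in the order of limits to be the only genuinely delicate point, and note that the coefficient $h_\alpha$ being bounded with bounded derivative and decaying is exactly what makes the perturbed maximum principle go through globally in $x$.
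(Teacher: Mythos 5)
Your overall strategy (push the initial condition back to a strictly positive time using \eqref{IVP3''}, then run a maximum/comparison principle on the unbounded strip against constant-plus-barrier supersolutions) is sound in spirit and close to the paper, which restricts to $t\in[\epsilon,T]$, treats $uh_\alpha$ as a bounded coefficient of a \emph{linear} parabolic operator, and simply invokes the comparison theorem of \cite[Theorem 4.4]{JMDN1} with the constants $\sup u_0+c(\epsilon)$ and $\inf u_0-c(\epsilon)$, letting $\epsilon\to0^+$ at the end. Your handling of the initial layer is fine. However, the execution of your barrier argument has genuine gaps. First, the key estimate is false in part of the admissible parameter range: $x\,h_\alpha(x)=x(1+x^2)^{-\alpha}\sim|x|^{1-2\alpha}$ as $|x|\to\infty$, so $\|xh_\alpha\|_\infty=\infty$ whenever $\alpha<1/2$, while the paper allows every $\alpha\in\mathbb{R}^+$; thus your constant $K$ does not exist and the quadratic barrier with an $R$-independent $\mu$ collapses exactly where it is needed. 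Second, the claimed cancellation at the maximum point is wrong: $u\neq w$ at $(x_0,t_0)$ (indeed $u>w$ there), so $h_\alpha(uu_x-ww_x)=h_\alpha w_x(u-w)\neq0$; the correct move is to substitute $u_x=w_x$ directly into $L[u]=0$ and bound the term $h_\alpha u\,w_x$ by $\|u\|_\infty\,|x_0 h_\alpha(x_0)|\,2\varepsilon$ — repairable, but as written the inequality $L[u]\geq L[w]$ is unjustified, and note this repair still runs into the first problem.

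Third, your own patch for the order of limits does not work: the lateral-boundary comparison needs $\varepsilon R^2\gtrsim\|u\|_{L^\infty(D_T)}-M$, so $R$ must grow as $\varepsilon$ shrinks and you cannot ``freeze $R$ and let $\varepsilon\to0$''; conversely, with $\varepsilon$ fixed, $\mu_R$ grows with $R$ and $\varepsilon\mu_R t$ does not vanish as $R\to\infty$. The exponential barrier you mention at the end is indeed the standard cure — it requires only $\|h_\alpha\|_\infty\|u\|_\infty<\infty$, hence works for all $\alpha>0$ — but as sketched the exponent has the wrong sign in $t$: with $w=M+\varepsilon\bigl(1+e^{\kappa(|x|-\gamma t)}\bigr)$ the terms $w_t$ and $-w_{xx}$ are both negative where it matters; you need growth of the form $e^{\kappa|x|+\gamma t}$ (or $e^{\gamma t}\cosh\kappa x$) with $\gamma\geq\kappa^2+\kappa\|h_\alpha\|_\infty\|u\|_\infty$ so that $w_t$ dominates $w_{xx}$ and the drift, and this is precisely the Phragm\'en--Lindel\"of-type argument that the comparison theorem cited by the paper packages for you. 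So either carry out the exponential-barrier computation properly (it is short), or do as the paper does and cite the comparison theorem for the linear operator $w_t-w_{xx}+(uh_\alpha)w_x$ with bounded coefficient; as it stands, the proposal does not prove the proposition for $\alpha<1/2$ and contains two incorrect steps even for $\alpha\geq1/2$.
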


\begin{proof}
Let $0 < \epsilon < T$ and $D_{T,\epsilon} = \{ (x,t) \in D_T :\ t\in (\epsilon , T] \}$. 
Via \eqref{IVP2} it follows that $u: \bar{D}_{T,\epsilon}\to\mathbb{R}$ satisfies 
\begin{equation} \label{prop1a}
u_t - u_{xx} + (u h_\alpha )u_x =0 \quad \text{ on } D_{T,\epsilon}.
\end{equation}
Additionally note that there exist positive constants $c(\epsilon ) = O(\epsilon )$ as $\epsilon \to 0^+$, such that 
\begin{equation}
\label{prop1c}
\inf_{x\in\mathbb{R}} u_0 - c(\epsilon) \leq u(x,\epsilon ) \leq \sup_{x\in\mathbb{R}} u_0 + c(\epsilon ) \quad \forall x \in \mathbb{R},
\end{equation} 
via condition \eqref{IVP3''}.
From \eqref{prop1a}, \eqref{IVP1} and \eqref{prop1c}, it follows from the comparison theorem for second order linear parabolic partial differential inequalies (see, for example \cite[Theorem 4.4]{JMDN1}) by considering: 
\begin{align*}
& \overline{u} = \sup_{x\in\mathbb{R}} u_0 + c(\epsilon) \text{ and } \underline{u} = u \text{ on } \bar{D}_{T,\epsilon} ; \\
& \overline{u} = u \text{ and } \underline{u} = \inf_{x\in\mathbb{R}} u_0 - c(\epsilon ) \text{ on } \bar{D}_{T,\epsilon};
\end{align*}
as regular supersolutions and regular subsolutions respectively, that
\begin{equation}
\label{prop1d}
\inf_{x\in\mathbb{R}} u_0 - c(\epsilon) \leq u \leq \sup_{x\in\mathbb{R}} u_0 + c(\epsilon ) \quad \text{ on } \bar{D}_{T,\epsilon}.
\end{equation}
The result follows from \eqref{prop1d} by letting $\epsilon\to 0^+$.
\end{proof}

\noindent We can now establish 

\begin{prop}
\label{globex}
There exists a global solution $u: D_\infty \to\mathbb{R}$ to [IVP].
\end{prop}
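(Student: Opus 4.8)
The plan is to obtain the global solution by iterating the local existence result of Proposition \ref{locex}, using the \emph{a priori} bound of Proposition \ref{prop1} to prevent the local time-step from shrinking to zero. The crucial observation is that the local existence time $T^* = T(\|u_0\|_\infty,\alpha)$ in \eqref{locex0} depends on the initial data only through $\|u_0\|_\infty$, and Proposition \ref{prop1} guarantees that any solution to [IVP] satisfies $\|u(\cdot,t)\|_\infty \leq \|u_0\|_\infty$ for all $t$. Hence if one restarts the construction from a later time slice, the new initial bound is no larger, so the same uniform time-step $T^*(\|u_0\|_\infty,\alpha)$ can be used at every stage.

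First I would define $\delta = T^*(\|u_0\|_\infty,\alpha) > 0$ and construct the solution on $[0,\delta]$ via Corollary \ref{locexIVP}. The main technical point is the restart: given the solution $u$ on $D_\delta$, one needs a data-to-solution correspondence starting at time $t=\delta$ rather than $t=0$. For this I would note that $u(\cdot,\delta) \in L^\infty(\mathbb{R})$ with $\|u(\cdot,\delta)\|_\infty \leq \|u_0\|_\infty$ (by Proposition \ref{prop1}), and moreover — by Proposition \ref{locexdx} and Proposition \ref{locexdxdxdt} applied at the positive time $\delta$ — $u(\cdot,\delta)$ is in fact $C^2(\mathbb{R}) \cap W^{2,\infty}(\mathbb{R})$ with norms controlled by $c(\|u_0\|_\infty,\alpha)$. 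Running Proposition \ref{locex} (and its corollaries, in the smooth-data form noted in Remark \ref{remlocex}) with this new initial datum yields a solution on a further time interval of length $T^*(\|u(\cdot,\delta)\|_\infty,\alpha) \geq \delta$, which, being a classical solution of \eqref{IVP2} agreeing with $u$ at $t = \delta$, pastes together with $u$ to give a $C^{2,1}$ solution on $D_{2\delta}$. Here the interior parabolic regularity together with the matching of $u$, $u_x$, $u_{xx}$, $u_t$ across $t=\delta$ (which follows since the restarted solution inherits the representation formulae and Remark \ref{remlocex} gives continuity up to the starting slice) ensures the glued function is genuinely $C^{2,1}$ and solves \eqref{IVP2} throughout, while \eqref{IVP3''} is a statement only about $t \to 0^+$ and is already secured on the first interval.

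Iterating, at step $k$ one has a solution on $D_{k\delta}$ satisfying $\|u\|_\infty \leq \|u_0\|_\infty$, restarts from $t = k\delta$ with the same step $\delta$, and extends to $D_{(k+1)\delta}$; since $k\delta \to \infty$, this produces a solution $u : D_\infty \to \mathbb{R}$ to [IVP], as required.

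The main obstacle I anticipate is making the restart argument fully rigorous: one must verify that the solution produced by Proposition \ref{locex} on $[k\delta,(k+1)\delta]$, which \emph{a priori} only satisfies \eqref{IVP3''} at its own starting slice $t = k\delta$, actually glues to the earlier piece with matching first and second spatial derivatives and first time derivative at $t = k\delta$. This is where Remark \ref{remlocex} is needed — it upgrades the construction to $C^{2,1}(\bar D)$ up to and including the starting time when the data lie in $C^2 \cap W^{2,\infty}$ — combined with the uniqueness theory (the contraction in Proposition \ref{locex}, or the maximum-principle uniqueness referenced in the introduction) to identify the two representations on the overlap. A secondary point worth stating carefully is that the local time $T^*$ is bounded below uniformly along the iteration precisely because $\|u(\cdot,k\delta)\|_\infty$ never exceeds $\|u_0\|_\infty$; without Proposition \ref{prop1} the steps could in principle shrink and the union of intervals fail to exhaust $(0,\infty)$.
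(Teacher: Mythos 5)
Your proposal is correct and follows essentially the same route as the paper: the paper's (very terse) proof likewise combines the \emph{a priori} bound of Proposition \ref{prop1} with finitely many applications of Proposition \ref{locex} together with Remark \ref{remlocex} to restart from later time slices and exhaust any $D_T$. Your write-up simply makes explicit the details the paper leaves implicit, namely the uniform lower bound on the local existence time, the $C^2\cap W^{2,\infty}$ regularity of the restart data from Propositions \ref{locexdx} and \ref{locexdxdxdt}, and the gluing of the pieces across the restart slice.
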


\begin{proof}
For any $T>0$, via Proposition \ref{prop1}, any solution to [IVP] is {\emph{a priori}} uniformly bounded on $D_T$. 
Thus, for each $T>0$, it follows from a finite number of applications of Proposition \ref{locex} (with Remark \ref{remlocex}) that there exists a solution to [IVP] on $D_T$, and hence, a global solution to [IVP] exists on $D_\infty$, as required. 
\end{proof}

\noindent We next establish local in time continuous dependence on the initial data, of global solution to [IVP].

\begin{prop}
\label{contdep}
Let $T>0$ and for $i=1,2$, suppose that $u_i : D_T\to\mathbb{R}$ are solutions to [IVP] with constant $\alpha$, and initial data $u_{0i}$, respectively. 
Then,
\begin{equation}
\label{contdep1}
||(u_1-u_2)(\cdot , t) ||_\infty \leq 
||u_{01}-u_{02}||_\infty c(||u_{01}||_\infty , ||u_{02}||_\infty , \alpha , T) \quad \forall t\in (0,T]. 
\end{equation}
\end{prop}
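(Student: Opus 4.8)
The plan is to pass to the integral (Duhamel) formulation \eqref{IE1}, which every solution to [IVP] on $D_T$ satisfies, and then to close a Gr\"onwall-type estimate for the difference $w := u_1 - u_2$. Since $u_1$ and $u_2$ both satisfy \eqref{IE1} and \eqref{IE2} on $D_T$, subtracting the two identities and using $u_1^2 - u_2^2 = (u_1+u_2)w$ gives
\begin{align}
\nonumber
w(x,t) & = \int_{-\infty}^\infty (u_{01}(s) - u_{02}(s)) G(x,t;s,0)\, ds \\
\nonumber
& \quad + \int_0^t \int_{-\infty}^\infty \frac{(u_1+u_2)(s,\tau)\, w(s,\tau)}{2} \left( G(x,t;s,\tau) h_\alpha'(s) + G_s(x,t;s,\tau) h_\alpha(s) \right) ds\, d\tau
\end{align}
for all $(x,t)\in D_T$.

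Next I would estimate the two terms. For the first term, \eqref{Gprop1} gives $\int_{-\infty}^\infty G(x,t;s,0)\,ds = 1$ with $G\geq 0$, so it is bounded in modulus by $||u_{01}-u_{02}||_\infty$. For the second term, Proposition \ref{prop1} supplies the \emph{a priori} bounds $||u_i(\cdot,\tau)||_\infty \leq ||u_{0i}||_\infty$, and, proceeding exactly as in the chain of inequalities leading to \eqref{locex3} via \eqref{Gprop1} and \eqref{Gprop2} (together with $||h_\alpha||_\infty \leq 1$), the inner $s$-integral of $||h_\alpha'||_\infty G + ||h_\alpha||_\infty |G_s|$ at time $\tau$ is bounded by $||h_\alpha'||_\infty + (\pi(t-\tau))^{-1/2}$. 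Setting $\phi(t) := ||w(\cdot,t)||_\infty$, which is finite since $w\in L^\infty(D_T)$ by \eqref{IE2}, these bounds yield the scalar Volterra inequality
\begin{align}
\nonumber
\phi(t) & \leq ||u_{01}-u_{02}||_\infty \\
\nonumber
& \quad + \frac{||u_{01}||_\infty + ||u_{02}||_\infty}{2} \int_0^t \left( ||h_\alpha'||_\infty + \frac{1}{\sqrt{\pi(t-\tau)}} \right) \phi(\tau)\, d\tau
\end{align}
for all $t\in(0,T]$.

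Finally I would invoke the generalised Gr\"onwall inequality with a weakly singular kernel from \cite{HYJGYD1}: the kernel $b(t-\tau) := \tfrac{1}{2}(||u_{01}||_\infty + ||u_{02}||_\infty)\big( ||h_\alpha'||_\infty + (\pi(t-\tau))^{-1/2}\big)$ is nonnegative and integrable on $[0,T]$, so the inequality gives $\phi(t) \leq ||u_{01}-u_{02}||_\infty E(t)$, where $E$ is nondecreasing and bounded on $[0,T]$ by a constant depending only on $||u_{01}||_\infty$, $||u_{02}||_\infty$, $\alpha$ and $T$ (through $||h_\alpha'||_\infty$). This is precisely \eqref{contdep1}.

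The \emph{main obstacle} is the $(t-\tau)^{-1/2}$ singularity in the kernel, which rules out a direct application of the classical Gr\"onwall lemma; it is handled by the weakly-singular version in \cite{HYJGYD1}, or, equivalently, by iterating the Volterra inequality once to replace the singular kernel by a bounded one, using $\int_\tau^t (t-\sigma)^{-1/2}(\sigma-\tau)^{-1/2}\,d\sigma = \pi$ and then applying the standard lemma. A minor technical point to record is that $t\mapsto\phi(t)$ is measurable (indeed lower semicontinuous), which follows from $w\in C(D_T)\cap L^\infty(D_T)$ by taking the supremum over a countable dense set of spatial points, so that the integral in the inequality is meaningful.
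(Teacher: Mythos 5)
Your proposal is correct and follows essentially the same route as the paper: subtract the Duhamel representations \eqref{IE1}, bound the nonlinear term with the \emph{a priori} estimate of Proposition \ref{prop1} together with \eqref{Gprop1}--\eqref{Gprop2}, and close the resulting weakly singular Volterra inequality with the generalised Gr\"onwall inequality of \cite{HYJGYD1}. The only (minor) difference is that the paper restarts the integral representation at $t=\epsilon$, using \eqref{IVP3''} to bound $\|u_1(\cdot,\epsilon)-u_2(\cdot,\epsilon)\|_\infty \leq \|u_{01}-u_{02}\|_\infty + c(\epsilon)$ and letting $\epsilon\to 0^+$ at the end, rather than invoking \eqref{IE1} all the way down to $\tau=0$ as you do.
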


\begin{proof}
Let $0 < \epsilon < T$ and set $v:\bar{D}_{T, \epsilon}\to\mathbb{R}$ to be 
\begin{equation} \label{contdep1'}
v = u_1 - u_2 \quad \text{ on } \bar{D}_{T, \epsilon}.
\end{equation}  
Via \eqref{IE1}-\eqref{IE2}, for given $u_{10}$ and $u_{20}$ there exist constants $c(\epsilon)=  O(\epsilon )$ as $\epsilon \to 0^+$, such that 
\begin{align}
\nonumber 
|v(x,t)| & \leq \int_{-\infty}^\infty |u_{1}(s,\epsilon) - u_{2}(s,\epsilon)| G(x,t;s,0) ds \\ 
\nonumber
& \quad + \frac{1}{2}\int_{0}^t \int_{-\infty}^\infty |u_1^2-u_2^2|(s,\tau + \epsilon ) \\
\nonumber 
& \quad \quad \quad \quad \quad \quad \quad \times \left( G(x,t;s,\tau )|h_\alpha '(s)| + |G_s(x,t;s,\tau )| h_\alpha (s) \right) ds d\tau , \\
\nonumber
& \leq ||u_{01}-u_{02}||_\infty + c(\epsilon ) + \\
\nonumber
& \quad \quad + \frac{1}{2} \int_{0}^t  (||u_1||_\infty + ||u_2||_\infty ) ||v (\cdot ,\tau )||_\infty \left( || h_\alpha'||_\infty + \frac{1}{\sqrt{\pi(t-\tau )}}\right) d\tau , \\
\label{contdep2}
& \leq ||u_{01}-u_{02}||_\infty + c(\epsilon ) + 
\int_{0}^t  \frac{c(||u_1||_\infty, ||u_2||_\infty, \alpha )}{\sqrt{t-\tau}} ||v(\cdot ,\tau )||_\infty d\tau
\end{align}
for all $(x,t)\in D_{T,\epsilon}$. 
We note, via the continuity and bounds on $u_{1t}$ and $u_{2t}$ given in Proposition \ref{locexdxdxdt}, it follows that $|| v(\cdot , t)||_\infty$ is a continuous and bounded function of $t$ on $[\epsilon ,T]$, and hence the integral in \eqref{contdep2} is well-defined. 
It follows immediately that
\begin{equation}
\label{contdep3}
||v(\cdot , t)||_\infty \leq 
||u_{01}-u_{02}||_\infty + c(\epsilon ) 
+ \int_{0}^t  \frac{c(||u_1||_\infty, ||u_2||_\infty, \alpha )}{\sqrt{t-\tau}} ||v(\cdot ,\tau )||_\infty d\tau
\end{equation}
for all $t\in [\epsilon ,T]$. 
Therefore, via a generalisation of Gronwall's inequality (see \cite[Corollary 2]{HYJGYD1}), and the {\emph{a priori}} bounds in Proposition \ref{prop1}, we conclude that
\begin{align}
\nonumber
||v(\cdot , t)||_\infty & \leq (||u_{01}-u_{02}||_\infty + c(\epsilon ))
 \left( c(||u_1||_\infty , ||u_2||_\infty , \alpha) \sum_{n=1}^\infty \frac{t^{n/2}}{\pi^{n/2} n \Gamma (n/2)}\right) \\
\label{contdep3'}
& \leq ( ||u_{01}-u_{02}||_\infty + c(\epsilon )) c(||u_{01}||_\infty , ||u_{02}||_\infty , \alpha , T)
\end{align}
for all $t\in [\epsilon ,T]$. 
On recalling \eqref{contdep1'}, \eqref{contdep1} follows by letting $\epsilon \to 0^+$ in \eqref{contdep3'}, as required.
\end{proof}

\noindent In summary, we have

\begin{thm} \label{GWP}
There exists a unique global solution $u: D_\infty \to \mathbb{R}$ to [IVP]. 
Moreover, for each $T,\epsilon >0$ and Lebesgue measurable $u_{01}\in L^\infty (\mathbb{R})$, there exists $\delta (T, \epsilon , ||u_{01}||_\infty ) >0$ such that for all Lebesgue measurable $u_{02}\in L^\infty (\mathbb{R})$ such that $||u_{01} - u_{02}||_\infty < \delta$ then the corresponding global solutions to [IVP] given by $u_1,u_2: D_T\to\mathbb{R}$ satisfy
\begin{equation*}
||(u_1-u_2)(\cdot , t)||_\infty < \epsilon \quad \forall t\in (0,T].
\end{equation*}
\end{thm}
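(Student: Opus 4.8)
The plan is to assemble the theorem entirely from results already established: Proposition \ref{globex} gives global existence, and Proposition \ref{contdep} gives, after a little unpacking, both uniqueness and the quantitative continuous-dependence statement. So the proof should be short, and I would present it in three short steps.

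First I would deduce uniqueness from Proposition \ref{contdep} by taking coincident data. Suppose $u_1,u_2:D_\infty\to\mathbb{R}$ are two global solutions to [IVP] with the same initial datum $u_0$. Fix an arbitrary $T>0$; restricting both solutions to $D_T$ and applying Proposition \ref{contdep} with $u_{01}=u_{02}=u_0$ gives $\|(u_1-u_2)(\cdot,t)\|_\infty\le 0$ for all $t\in(0,T]$, so $u_1\equiv u_2$ on $D_T$. Since $T$ is arbitrary, $u_1\equiv u_2$ on $D_\infty$; together with Proposition \ref{globex} this proves the first assertion.

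Next I would establish continuous dependence. Fix $T,\epsilon>0$ and a Lebesgue measurable $u_{01}\in L^\infty(\mathbb{R})$, and note that it suffices to produce a $\delta\in(0,1]$ with the stated property, since any smaller $\delta$ also works. For $u_{02}$ with $\|u_{01}-u_{02}\|_\infty<\delta\le 1$ we then automatically have $\|u_{02}\|_\infty\le\|u_{01}\|_\infty+1$. Let $u_1,u_2:D_T\to\mathbb{R}$ be the (unique) global solutions with data $u_{01},u_{02}$ restricted to $D_T$. The constant $c(\|u_1\|_\infty,\|u_2\|_\infty,\alpha,T)$ in \eqref{contdep1} is, by its construction in the proof of Proposition \ref{contdep} (powers of $\|u_1\|_\infty+\|u_2\|_\infty$ together with the convergent series in \eqref{contdep3'}), non-decreasing in each of its first two arguments; hence, using the a priori bounds of Proposition \ref{prop1} and $\|u_{02}\|_\infty\le\|u_{01}\|_\infty+1$, it is majorised by a quantity $C:=c(\|u_{01}\|_\infty,\|u_{01}\|_\infty+1,\alpha,T)$ depending only on $T$, $\alpha$ and $\|u_{01}\|_\infty$. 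Then \eqref{contdep1} yields $\|(u_1-u_2)(\cdot,t)\|_\infty\le C\,\|u_{01}-u_{02}\|_\infty$ for all $t\in(0,T]$, and choosing $\delta=\delta(T,\epsilon,\|u_{01}\|_\infty):=\min\{1,\,\epsilon/(C+1)\}$ gives $\|(u_1-u_2)(\cdot,t)\|_\infty<\epsilon$ on $(0,T]$, as required.

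I do not expect a genuine obstacle; the only point needing care is the one flagged above, namely that the constant furnished by Proposition \ref{contdep} must be controlled uniformly as $u_{02}$ ranges over a $\delta$-ball about $u_{01}$, which is why I first shrink $\delta$ to be at most $1$ and then exploit the monotone structure of that constant in its norm arguments.
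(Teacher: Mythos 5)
Your argument is correct and follows the same route as the paper, which likewise deduces existence from Proposition \ref{globex} and both uniqueness and continuous dependence from Proposition \ref{contdep}; your extra care in shrinking $\delta\le 1$ and using monotonicity of the constant $c(\|u_{01}\|_\infty,\|u_{02}\|_\infty,\alpha,T)$ to make it uniform over the $\delta$-ball simply spells out a detail the paper leaves implicit in its choice of $\delta(T,\epsilon,\|u_{01}\|_\infty)$.
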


\begin{proof}
The global existence and uniqueness of solutions to [IVP] follows from Propositions \ref{globex} and \ref{contdep}. 
Local in time continuous dependence also follows from Proposition \ref{contdep}.
\end{proof}

\noindent We conclude this section by establishing some qualitative properties of solutions to [IVP] for initial data of the form \eqref{IVP3}.
First we have

\begin{remk} \label{remk4mono}
Suppose the initial data in Proposition \ref{globex} satisfies $u_0\in C^k(\mathbb{R})\cap W^{k,\infty}(\mathbb{R})$ for some $k\in \mathbb{N}$ with $k\geq 2$. 
Then it follows from Remark \ref{remlocex} that the global solution to [IVP] can be extended continuously onto $\bar{D}_\infty$. 
Moreover, the global solution $u:\bar{D}_\infty\to\mathbb{R}$ has $k$ partial derivatives with respect to $x$ which are continuous on $\bar{D}_\infty$ and bounded on $\bar{D}_T$ for any $T>0$. 
This follows from an induction argument based on the derivative estimates in Propositions \ref{locexdx} and \ref{locexdxdxdt} with the identity 
\begin{equation*}
\frac{\partial^i}{\partial x^i} 
\left( \int_{\mathbb{R}} u_0(s) G(x,t;s,0)ds \right) =
\int_{\mathbb{R}} u_0^{(i)}(s) G(x,t;s,0)ds
\end{equation*} 
for all $(x,t) \in D_\infty$ and $i=1,\ldots , k$, used to bound the first integrals in \eqref{locexdx3} and \eqref{locexdxdx1}. 
As a consequence, it follows that $u_t$ has $k-2$ partial derivatives with respect to $x$ on $\bar{D}_\infty$ which are bounded on $\bar{D}_T$ for any $T>0$.  
\end{remk}

\noindent We now have

\begin{prop}
\label{mono}
Suppose that the initial data for [IVP] is given by \eqref{IVP3} and the corresponding solution is $u: D_\infty\to\mathbb{R}$. 
When $u^-<u^+$ ($u^->u^+$) then $u_x(\cdot , t) > 0$ ($< 0$) for all $t\in (0,\infty )$.
\end{prop}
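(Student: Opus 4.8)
The plan is to differentiate \eqref{IVP2} in $x$, observe that $v:=u_x$ then solves a \emph{linear homogeneous} parabolic equation, and read off the sign of $v$ from maximum principles — obtaining first the nonnegativity of $v$ for the Riemann data by approximation with smooth strictly monotone data, and then strict positivity by the strong maximum principle. Assume $u^-<u^+$ throughout; the case $u^->u^+$ follows by the identical argument applied to $-v$. Formally, differentiating \eqref{IVP2} in $x$ gives
\begin{equation}
\label{veqn}
v_t - v_{xx} + h_\alpha u\, v_x + \big( h_\alpha v + h_\alpha' u \big) v = 0 ,
\end{equation}
which is linear in $v$ because the apparent inhomogeneity $h_\alpha' u u_x$ is proportional to $v$; its coefficients $b:=h_\alpha u$ and $c:=h_\alpha v + h_\alpha' u$ are, by Propositions \ref{prop1} and \ref{locexdx}, continuous and bounded on $\bar D_{T,\epsilon}$ for every $0<\epsilon<T$.

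For \eqref{veqn} to hold classically one needs $u$ to have three continuous, locally bounded derivatives in $x$ on $D_\infty$, which is not among the regularity already established; obtaining this is the first and main technical point. Since $h_\alpha\in C^\infty(\mathbb R)$ and the obstruction noted in the Introduction occurs only as $t\to0^+$, interior parabolic (Schauder) estimates for \eqref{IVP2} on compact subsets of $D_\infty$ — bootstrapped from $u\in C^{2,1}(D_\infty)$ together with the local bounds of Propositions \ref{locexdx} and \ref{locexdxdxdt} — upgrade $u$ to be $C^\infty$ in $x$ on $D_\infty$, so that $v=u_x$ is a classical solution of \eqref{veqn}. (Alternatively, for each $t_0>0$ one has $u(\cdot,t_0)\in C^2(\mathbb R)\cap W^{2,\infty}(\mathbb R)$ by Propositions \ref{locexdx}--\ref{locexdxdxdt}, and one may iterate the argument of Remark \ref{remlocex}, differentiating \eqref{locexdxdx7} and \eqref{locexdt3} one further time and using \eqref{locexdx2} and an analogous H\"older estimate for $u_{xx}$.) The $x$-dependence of $h_\alpha$ is the essential reason this step cannot be bypassed: $u(\,\cdot+\sigma,t)$ does not solve \eqref{IVP2}, so monotonicity cannot be deduced by comparing $u$ with its own translates, and \eqref{veqn} is the natural substitute.

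Next I would show $v\ge0$ on $D_\infty$ by approximation. Choose $u_{0,n}\in C^\infty(\mathbb R)$ nondecreasing with $u^-\le u_{0,n}\le u^+$, $u_{0,n}\equiv u^-$ on $(-\infty,-1/n]$, $u_{0,n}\equiv u^+$ on $[1/n,\infty)$, and $u_{0,n}\to u_0$ pointwise on $\mathbb R\setminus\{0\}$. By Remark \ref{remk4mono} the corresponding global solutions $u_n$ satisfy $u_n\in C^{2,1}(\bar D_\infty)$ with $u_n,u_{nx},u_{nxx},u_{nt}$ (and further $x$-derivatives) continuous and bounded on each $\bar D_T$, so $v_n:=u_{nx}$ is a bounded classical solution of \eqref{veqn} (with $u_n$ in place of $u$) on $D_\infty$ with $v_n(\cdot,0)=u_{0,n}'\ge0$; the comparison principle for bounded solutions (see \cite{JMDN1}) then gives $v_n\ge0$ on $D_\infty$. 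To pass to the limit, note $\|u_{0,n}\|_\infty\le\max\{|u^-|,|u^+|\}$ uniformly in $n$, so Propositions \ref{locexdx} and \ref{locexdxdxdt} bound $u_{nx},u_{nxx},u_{nt}$ on compact subsets of $D_\infty$ uniformly in $n$; by Arzel\`a--Ascoli a subsequence of $(u_n,u_{nx})$ converges locally uniformly on $D_\infty$, the limit solves \eqref{IE1}--\eqref{IE2} (pass to the limit in \eqref{IE1}, using $\int_{\mathbb R} u_{0,n}(s)G(x,t;s,0)\,ds\to\int_{\mathbb R}u_0(s)G(x,t;s,0)\,ds$ by dominated convergence), hence equals $u$ by Corollary \ref{locexIVP} and Theorem \ref{GWP}. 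Thus $u_{nx}\to u_x$ locally uniformly and $v=u_x\ge0$ on $D_\infty$. (Since the Riemann data cannot be approximated in $L^\infty(\mathbb R)$, Proposition \ref{contdep} is not directly available, and the $n$-uniform interior estimates together with uniqueness are genuinely needed here.)

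Finally, strict positivity follows from the strong maximum principle applied to \eqref{veqn}. Suppose $v(x_0,t_0)=0$ for some $t_0>0$. For any $R>|x_0|$ and $0<\epsilon<t_0$, on the bounded cylinder $(-R,R)\times(\epsilon,t_0]$ the function $v\ge0$ solves \eqref{veqn} with bounded continuous coefficients and vanishes at the interior point $(x_0,t_0)$; writing $w=e^{-Kt}v$ with $K$ large enough that the zeroth-order coefficient of the equation for $w$ is nonnegative, the strong minimum principle forces $v\equiv0$ on $(-R,R)\times(\epsilon,t_0]$. Letting $R\to\infty$ and $\epsilon\to0^+$ gives $v\equiv0$, i.e. $u$ independent of $x$, on $\mathbb R\times(0,t_0]$; but then, by the consequence of \eqref{IVP3''} noted after [IVP], $u_0$ would be constant off $\{0\}$, contradicting $u^-\neq u^+$. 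Hence $v=u_x>0$ on $D_\infty$. The case $u^->u^+$ is identical: $-v$ also solves \eqref{veqn}, the approximating data are taken strictly decreasing, and one concludes $u_x<0$ on $D_\infty$. The main obstacle in this whole scheme is the regularity upgrade of the second paragraph; the limiting argument of the third paragraph is the secondary point requiring care.
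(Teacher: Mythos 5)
Your proposal is correct and follows essentially the same route as the paper: approximate the Riemann data by smooth monotone profiles, apply the weak maximum/minimum principle to $u_{nx}$ solving the linearised equation \eqref{mono7}, pass to the limit via uniform interior estimates, Arzel\`a--Ascoli and uniqueness, and finish with the strong maximum principle to get the strict sign. The only difference is that you make explicit two points the paper treats tersely — the interior regularity needed for $u_x$ to satisfy the linearised equation classically, and the identification of the Arzel\`a--Ascoli limit with $u$ through the integral equation \eqref{IE1} — which is a refinement rather than a departure.
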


\begin{proof}
Consider the sequence of functions $u_0^{(n)}:\mathbb{R} \to \mathbb{R}$ for $n\in\mathbb{N}$ such that
\begin{align}
\label{mono1}
& u_0^{(n)}= u_0 \text{ on } \mathbb{R}\setminus [-1/n,1/n] ,\\
\label{mono2}
& u_0^{(n)} \in C^3(\mathbb{R}) \cap L^\infty (\mathbb{R}) ,\\
\label{mono3}
& u_0^{(n)} \text{ are non-decreasing (non-increasing) when } u^-<u^+ (u^->u^+) ,
\end{align} 
and $u_0$ is given by \eqref{IVP3}. 
It follows from Proposition \ref{globex} that there exists a unique solution $u^{(n)}:\bar{D}_\infty \to \mathbb{R}$ to [IVP] with initial data $u_0^{(n)}$. 
Moreover, it follows from \eqref{mono1}-\eqref{mono3}, Proposition \ref{prop1} and Remark \ref{remk4mono} that $w:\bar{D}_\infty\to\mathbb{R}$ given by $w = u_x^{(n)}$ on $\bar{D}_\infty$ satisfies:
\begin{align}
\label{mono4}
& w\in C^{2,1}(\bar{D}_T) \cap L^\infty (\bar{D}_T ) \quad \text{ for each } T>0, \\
\label{mono5}
& w_x,w_t,w_{xx} \in L^\infty (\bar{D}_T) \quad \text{ for each } T>0, \\
\label{mono6}
& w(\cdot , 0) \geq 0\ (\leq 0) \text{ on } \partial D \text{ when } u^- <u^+\ (u^- > u^+), \\
\label{mono7}
& w_t - w_{xx} + u^{(n)}h_\alpha w_x + (h_\alpha w + u^{(n)}h_\alpha' )w = 0 \text{ on } D_\infty .
\end{align}
Properties \eqref{mono4}-\eqref{mono7} ensure that we can apply the minimum (maximum) principle (see \cite[Theorem 3.3]{JMDN1} to $w$ to establish that $w \geq 0\ (\leq 0)$ when $u^-<u^+\ (u^->u^+)$. 

\noindent Recalling \eqref{mono1}-\eqref{mono3}, it follows from Proposition \ref{prop1} that $u^{(n)}$ are uniformly {\emph{a priori}} bounded on $\bar{D}_\infty$ for $n\in\mathbb{N}$. 
Moreover, via Propositions \ref{locexdx} and \ref{locexdxdxdt}, $u_t^{(n)}$ and $u_x^{(n)}$ are bounded on compact subsets of $D_\infty$ uniformly for $n\in\mathbb{N}$. 
Therefore $u^{(n)}$ forms a uniformly bounded equicontinuous sequence of functions on compact subsets of $D_\infty$.
Hence, there exists a subsequence $u^{(n_j)}$ which converges uniformly as $n_j\to \infty$ to a continuous bounded function on each compact subset of $D_\infty$. 
Since the global solution $u:D_\infty \to \mathbb{R}$ to [IVP] with initial data $u_0$ given by \eqref{IVP3} is unique, it follows that on compact subsets of $D_\infty$, $u^{(n_j)}$ converges uniformly to $u$. 
Therefore, $u_x(\cdot, t) \geq 0\ (\leq 0)$ if $u^-<u^+\ (u^->u^+)$.

\noindent Observe from \eqref{IVP3} and \eqref{locexdx8} that $u_x$ is non-constant as $t\to 0^+$. 
Thus from the strong minimum (maximum) principle (see, \cite[Chapter 2]{AF1}) applied to $u_x$ on $[-X,X]\times [T',T]$ with sufficiently small $T'>0$ and arbitratry $X,T>0$, it follows that $u_x>0$ ($<0$) on $D_\infty$, as required. 
\end{proof}

\noindent We next have the far field result

\begin{prop}
\label{xlims}
Suppose the initial data for [IVP] is given by \eqref{IVP3}. 
Then the solution $u: D_\infty\to\mathbb{R}$ satisfies 
\begin{equation*}
u(x,t) \to u^\pm \text{ as } x\to \pm \infty \text{ uniformly for } t\in (0,T].
\end{equation*} 
\end{prop}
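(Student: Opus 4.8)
The plan is to use the integral representation \eqref{IE1} for $u$ together with the a priori bound $|u|\le\max(|u^+|,|u^-|)$ from Proposition \ref{prop1}, and to show that the right-hand side of \eqref{IE1} tends to $u^\pm$ as $x\to\pm\infty$ uniformly in $t\in(0,T]$. By symmetry it suffices to treat $x\to+\infty$ and show $u(x,t)\to u^+$. The first term $\int_{-\infty}^\infty u_0(s)G(x,t;s,0)\,ds$ is the heat-kernel extension of the step data \eqref{IVP3}; writing $u_0 = u^+ + (u^--u^+)\chi_{(-\infty,0]}$ and using $\int_{\mathbb R}G(x,t;s,0)\,ds = 1$, the first term equals $u^+ + (u^--u^+)\int_{-\infty}^0 G(x,t;s,0)\,ds$, and the remaining integral is $\tfrac12\operatorname{erfc}(x/\sqrt{4t})$, which for $x>0$ is bounded by $e^{-x^2/4t}\le$ (for $t\le T$) a quantity that tends to $0$ as $x\to+\infty$ uniformly in $t\in(0,T]$ — more precisely one bounds it by $\sup_{t\le T}e^{-x^2/4t}$ after a crude estimate, or notes $\tfrac12\operatorname{erfc}(x/\sqrt{4t})\le \sqrt{T}/(\sqrt\pi\,x)\,e^{-x^2/4T}\to 0$. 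So the first term converges to $u^+$ uniformly for $t\in(0,T]$.

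The main work is the double integral (the nonlinear/Duhamel term). Denote it $N(x,t)$. Using $|u|\le C_0:=\max(|u^+|,|u^-|)+1$ on $D_\infty$ (or simply the a priori bound) together with the pointwise bounds \eqref{Gprop1} and \eqref{Gprop2} on $G$ and $G_s$, one has the crude estimate already appearing in the proof of Proposition \ref{locex}, namely $|N(x,t)|\le C_0^2(\tfrac{\|h_\alpha'\|_\infty}{2}\sqrt t + \tfrac1{\sqrt\pi})\sqrt t$, which is bounded on $D_T$ but does not decay in $x$. To extract decay as $x\to+\infty$ I would exploit the spatial decay of $h_\alpha$ and $h_\alpha'$: for fixed large $R>0$ split the $s$-integral into $|s|\le R$ and $|s|>R$. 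On $|s|>R$, both $h_\alpha(s)$ and $|h_\alpha'(s)|$ are at most $c(\alpha)R^{-2\alpha}$ (resp.\ $R^{-(2\alpha+1)}$), so that part of $N$ is bounded by $c(C_0,\alpha)R^{-2\alpha}\sqrt T$, uniformly in $(x,t)$. On $|s|\le R$, the factor $h_\alpha,h_\alpha'$ are bounded, but now $x-s\ge x-R$, so $G(x,t;s,\tau)$ and $|G_s(x,t;s,\tau)|$ carry a Gaussian factor $e^{-(x-R)^2/4(t-\tau)}$; integrating $\int_0^t\int_{|s|\le R}$ and using that $\int_0^t (t-\tau)^{-1}e^{-(x-R)^2/4(t-\tau)}\,d\tau$ (coming from the $G_s$ term, with an extra $|x-s|/(t-\tau)$ that is absorbed) is controlled, for $t\le T$, by a quantity tending to $0$ as $x\to\infty$ uniformly in $t$ — e.g.\ by the substitution $r=(x-R)^2/4(t-\tau)$ one gets $\int_{(x-R)^2/4t}^\infty r^{-1}e^{-r}\,dr\le e^{-(x-R)^2/4T}\cdot\tfrac{4T}{(x-R)^2}$ up to constants, which $\to 0$. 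Thus $\limsup_{x\to+\infty}\sup_{t\in(0,T]}|N(x,t)|\le c(C_0,\alpha)R^{-2\alpha}\sqrt T$ for every $R$, and letting $R\to\infty$ gives $N(x,t)\to 0$ uniformly in $t\in(0,T]$.

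Combining the two pieces gives $u(x,t)\to u^+$ as $x\to+\infty$ uniformly for $t\in(0,T]$, and the $x\to-\infty$ case with limit $u^-$ is identical after the reflection $x\mapsto -x$, $s\mapsto -s$ (under which $h_\alpha$ is invariant and the step data \eqref{IVP3} maps to the reflected step). The main obstacle is organising the splitting of the Duhamel integral so that the Gaussian tail bounds for the $|s|\le R$ piece are genuinely uniform in $t\in(0,T]$ down to $t\to0^+$ — one must be slightly careful that the singular factors $(t-\tau)^{-1}$ and $|x-s|(t-\tau)^{-1}$ arising from $G_s$ do not spoil uniformity, but since $|x-s|\ge x-R\to\infty$ the Gaussian always dominates any negative power of $(t-\tau)$, so the estimate goes through; I would make this precise with the change of variable above rather than with the change-of-variables $s = x+2\sqrt{t-\tau}\lambda$ used elsewhere, since here it is the largeness of $x-s$, not smallness of $t-\tau$, that we want to use. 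As a minor alternative, one could instead invoke continuous dependence (Proposition \ref{contdep}) to compare $u$ with the solution having constant data $u^+$ — but since the step data is not close to a constant in $L^\infty$, this does not directly work, so the direct integral estimate above is the cleanest route.
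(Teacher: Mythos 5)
Your argument is essentially correct, but it is a genuinely different route from the paper's. The paper does not work with the integral representation at all: it extends $u$ continuously onto $\bar\Omega_T=(-\infty,-1]\times[0,T]$ (using continuity of the step data away from the origin), and then sandwiches $u$ between explicit regular sub- and supersolutions of the linearised operator $L[w]=w_t-w_{xx}-(uh_\alpha)w_x$, namely the constant $u^-$ and the barrier $u^-\pm|u^+-u^-|e^{(\|u_0\|_\infty+1)t+x}$, invoking the comparison theorem of \cite[Theorem 4.4]{JMDN1} together with the a priori bound of Proposition \ref{prop1}; the barrier decays as $x\to-\infty$ uniformly on $[0,T]$, which gives the far-field limit, and the $x\to+\infty$ case follows by symmetry. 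Notice that this argument only needs $\|h_\alpha\|_\infty\le 1$ (so that the coefficient $uh_\alpha$ of $w_x$ is bounded), whereas your argument makes essential use of the decay $h_\alpha(s),h_\alpha'(s)\to 0$ as $|s|\to\infty$ to kill the Duhamel term in the region $|s|>R$; both mechanisms are legitimate here since $\alpha>0$. Your splitting does go through: on $|s|>R$ the bounds \eqref{Gprop1}--\eqref{Gprop2} give a contribution $\le c(\|u_0\|_\infty,\alpha)(1+R^2)^{-\alpha}(\sqrt T+T)$ uniformly in $(x,t)$, and on $|s|\le R$ with $x>R$ one can even avoid your change of variable by noting $G_s(x,t;s,\tau)>0$ for $s<x$, so $\int_{-R}^{R}|G_s|\,ds\le G(x,t;R,\tau)\le(4\pi(t-\tau))^{-1/2}e^{-(x-R)^2/4(t-\tau)}$ and, since $e^{-(x-R)^2/4\sigma}\le e^{-(x-R)^2/4T}$ for $\sigma\le T$, both kernel contributions are $O\bigl(e^{-(x-R)^2/4T}\bigr)$ uniformly for $t\in(0,T]$; letting $x\to\infty$ and then $R\to\infty$ completes your estimate, so the uniformity worry you flag is harmless. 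The trade-off: the paper's barrier argument is shorter and yields an exponential approach rate without touching the singular kernels, but needs the comparison machinery and the boundary extension of $u$; your argument is self-contained given \eqref{IE1} and the Appendix bounds, works directly on $D_T$ down to $t\to0^+$, and exhibits explicitly how the spatial decay of the sound speed enters, at the cost of the two-region bookkeeping. Your final remark is also right that Proposition \ref{contdep} cannot substitute for either argument, since the step data is not an $L^\infty$-small perturbation of a constant.
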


\begin{proof}
Let $u$ be the unique global solution to [IVP] with initial condition \eqref{IVP3} and let $\Omega_T := (-\infty , -1] \times [0,T]$. 
Since $u_0$ is continuous on $\mathbb{R}^-$, it follows for that $u$ can be extended onto $\bar{\Omega}_T$ so that $u\in C^{2,1}(\Omega_T )\cap C(\bar{\Omega}_T)$. 
Consider the following pairs of functions $( \underline{u} , \overline{u} )$ with domain $\bar{\Omega}_T$ and co-domain $\mathbb{R}$. 
\begin{itemize}
\item For $u^-<u^+$ and each $(x,t)\in \bar{\Omega}_T$ we define:
\begin{align}
\label{xlims1}
& \underline{u}(x,t) = u^- \text{ and } \overline{u}(x,t) = u(x,t);  \\
\label{xlims2}
&\underline{u}(x,t) = u(x,t) \text{ and } \overline{u}(x,t) = u^- + |u^+-u^-| e^{(||u_0||_\infty + 1)t + x}.
\end{align} 
\item For $u^->u^+$ and each $(x,t)\in \bar{\Omega}_T$ we define: 
\begin{align}
\label{xlims3}
& \underline{u}(x,t) = u^- - |u^+-u^-| e^{(||u_0||_\infty + 1)t + x} \text{ and } \overline{u}(x,t) = u(x,t) ;  \\
\label{xlims4}
&\underline{u}(x,t) = u(x,t) \text{ and } \overline{u}(x,t) = u^-  .
\end{align} 
\end{itemize}
It follows that the pairs in \eqref{xlims1}-\eqref{xlims4} are all regular subsolution and regular supersolutions on $\bar{\Omega}_T$ for the second order linear parabolic partial differential operator $L:C^{2,1}(\Omega_T )\to \mathbb{R}(\Omega_T )$ given by
\begin{equation}
\label{xlims5}
L[w] = w_t - w_{xx} - (uh_\alpha)w_x \text{ on } \Omega_T \quad \forall w\in C^{2,1}(\Omega_T ). 
\end{equation}
It follows from \eqref{xlims1}-\eqref{xlims5}, Proposition \ref{prop1}, and the comparison theorem \cite[Theorem 4.4]{JMDN1} that $u(x,t)\to u^-$ as $x\to - \infty$ uniformly for $t\in (0,T]$ for each $T>0$. 
The corresponding result for the limit as $x\to \infty$ follows from a symmetrical argument.
\end{proof}

\section{Conclusion}
In this note we have established a well-posedness result for [IVP] to complement the large-$t$ asymptotic analysis for solutions to [IVP] contained in \cite{DNJBJMCD1}.
Further work to establish convergence and qualitative properties of the finite difference approximation utilised in \cite{DNJBJMCD1} is of interest to the authors. 
Moreover, the development of methods to rigorously establish more results the theory in \cite{DNJBJMCD1} illustrates, is also of interest to the authors.

\appendix
\section{Properties of $G$} 
\label{Appendix-G}
We note several properties of the fundamental solution to the heat equation on $D_T$, given by \eqref{Gdef}, here:
\begin{align}
\label{Gprop1}
& \int_{-\infty}^ \infty G(x,t;s,\tau) ds = 1 \quad \forall\ (x,t)\in D_T,\ 0\leq \tau <t; \\
\label{Gprop2}
& \int_{-\infty}^ \infty |G_s(x,t;s,\tau)| ds = \frac{1}{\sqrt{\pi(t-\tau )}} \quad \forall\ (x,t)\in D_T,\ 0\leq \tau <t; \\ 
\label{Gprop3}
& \int_{-\infty}^ \infty |G_{ss}(x,t;s,\tau)| ds \leq \frac{c}{(t-\tau )} \quad \forall\ (x,t)\in D_T,\ 0\leq \tau <t; \\ 
\label{Gprop4}
& \int_{-\infty}^ \infty G_{ss}(x,t;s,\tau) ds = 0  \quad \forall\ (x,t)\in D_T,\ 0\leq \tau <t.
\end{align}
Moreover, for any $\beta \in (0,1)$ there exist constants $c(\beta)$ such that:
\begin{align}  
\label{Gprop1'}
& \int_{-\infty}^\infty |G(x_1,t;s,\tau ) - G(x_2,t;s,\tau ) |ds \leq \text{c}(\beta) \left( \frac{|x_1-x_2|}{\sqrt{t-\tau}} \right)^\beta , \\
 \label{Gprop2'}
& \int_{-\infty}^\infty |G_s(x_1,t;s,\tau ) - G_s(x_2,t;s,\tau ) |ds \leq \frac{\text{c}(\beta)}{\sqrt{t-\tau}} \left( \frac{|x_1-x_2|}{\sqrt{t-\tau}} \right)^\beta , \\
 \label{Gprop3'}
& \int_{-\infty}^\infty |G_{ss}(x_1,t;s,\tau ) - G_{ss}(x_2,t;s,\tau ) |ds \leq \frac{\text{c}(\beta)}{(t-\tau)} \left( \frac{|x_1-x_2|}{\sqrt{t-\tau}} \right)^\beta ,
\end{align}
for all $x_1,x_2\in \mathbb{R}$ and $0 \leq \tau < t \leq T$; and
\begin{align}
\label{Gprop1t}
& \int_{-\infty}^\infty |G(x,t_1;s,\tau ) - G(x,t_2;s,\tau ) |ds \leq \text{c}(\beta) \left( \frac{|t_1-t_2|}{t_2 - \tau} \right)^{\beta / 2} ,  \\
\label{Gprop2t}
& \int_{-\infty}^\infty |G_s(x,t_1;s,\tau ) - G_s(x,t_2;s,\tau ) |ds \leq \frac{\text{c}(\beta)}{\sqrt{t_2-\tau}} \left( \frac{|t_1-t_2|}{t_2 - \tau} \right)^{\beta / 2} , \\
\label{Gprop3t}
& \int_{-\infty}^\infty |G_{ss}(x,t_1;s,\tau ) - G_{ss}(x,t_2;s,\tau ) |ds \leq \frac{\text{c}(\beta)}{(t_2-\tau )} \left( \frac{|t_1-t_2|}{t_2 - \tau} \right)^{\beta / 2} ,
\end{align}
for all $x\in \mathbb{R}$ and $0 \leq \tau < t_2 < t_1 \leq T$. 
These properties can be derived via the approach described in \cite[Ch. 1, Lemma 3]{AF1}.

\bibliographystyle{plain} 
\bibliography{Bibliography.bib}

\end{document}